\newtheorem{theorem}{Theorem}[section]
\newtheorem{lem}[theorem]{Lemma}
\newtheorem{example}[theorem]{Example}
\newtheorem{definition}{Definition}
\newtheorem{conjecture}[theorem]{Conjecture}
\newtheorem{question}[theorem]{Question}
\renewcommand{\epsilon}{\varepsilon}
\newtheoremstyle{upright}%
        {8pt plus2pt minus4pt}%
        {8pt plus2pt minus4pt}%
        {\upshape}%
        {}%
        {\bfseries\scshape}%
        {:}%
        {1em}%
        {}%
\theoremstyle{upright}
\newtheorem{remark}[theorem]{Remark}
\newcommand{\ignore}[1]{}
\renewcommand{\P}{\mathbb{P}}
\newcommand{\E}{\mathbb{E}}
\newcommand{\R}{\mathbb{R}}
\newcommand{\Z}{\mathbb{Z}}
\newcommand{\N}{\mathbb{N}}
\newcommand{\brmul}{\discretionary{\mbox{$\,\cdot$}}{}{}}
\def\reff{R_{\rm eff}}
\begin{document}

\title{Random Walk in Changing Environment}

\author{{Gideon Amir \thanks{Bar-Ilan University, Ramat-Gan, 5290002,
Israel. Email: gidi.amir@gmail.com}} \quad {Itai Benjamini
\thanks{Weizmann Institute, Rehovot, 76100,
Israel. Email: itai.benjamini@weizmann.ac.il}} \quad {Ori
Gurel-Gurevich \thanks{The Hebrew University of Jerusalem, 91904, Israel.
Email: origurel@math.huji.ac.il}} \quad {Gady Kozma
\thanks{Weizmann Institute, Rehovot, 76100, Israel. Email:
gady.kozma@weizmann.ac.il}}}

\maketitle

\begin{abstract}
In this paper we introduce the notion of \emph{Random Walk in
Changing Environment} -
a random walk in which each step is performed in a different graph
on the same set of vertices, or more generally, a weighted random walk on the same vertex and edge sets but with different (possibly 0) weights in each step. This is a very wide class of RW, which includes some well known types of RW as special cases (e.g. reinforced RW, true SAW). We define and explore various possible properties of such walks, and provide criteria for recurrence and transience when the underlying graph is $\N$ or a tree.
We provide an example of such a process on $\Z^2$ where conductances can only change from $1$ to $2$ (once for each edge) but nevertheless the walk is transient, and conjecture that such behaviour cannot happen when the weights are chosen in advance, that is, do not depend on the location of the RW.
\end{abstract}

\section{Introduction}

Theseus is thrown into Daedalus' labyrinth, this time without a ball
of thread. Noticing that the labyrinth is a subgraph of $\Z^2$,
Theseus decides to simply random walk his way out - he knows that
he will almost surely reach the exit eventually. What Theseus
doesn't know is that Daedalus, aware of the recurrency of his
labyrinth, is working relentlessly to amend this vulnerability. He is
continually digging new passages throughout the
labyrinth, following a carefully laid plan. He cannot, however,
block existing passages, only create new ones and only between
adjacent rooms, such that the labyrinth is a subgraph of $\Z^2$ at
any point. Will Theseus find his way to the exit or is it possible that
Daedalus' cunning plan will deceive him forever (with positive
probability)?

It turns out that if Daedalus is aware of Theseus whereabouts he
can devise a plan to lure poor Theseus further and further into the
labyrinth with positive probability (see Thm~\ref{thm:MAW}).
We conjecture that this is not the case if Daedalus is not aware of Theseus whereabouts.

\begin{conjecture}
If Daedalus is oblivious of Theseus location, then Theseus will
almost surely reach the exit (infinitely many times, if he chooses
to stay in the labyrinth). In other words, Theseus' Random Walk is
recurrent.
\end{conjecture}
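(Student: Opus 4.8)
The plan is to separate the conjecture into an easy reduction and one hard estimate, and to pin down where obliviousness must enter.

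\textbf{Reduction.} It suffices to show $\P(|X_t|\to\infty)=0$ for every deterministic schedule $G_0\subseteq G_1\subseteq\cdots$ of subgraphs of $\Z^2$. Indeed, every $G_t$ contains the fixed initial labyrinth $G_0$, and every present edge keeps a conductance in a fixed positive range, so from any room of $G_0$ there is a self-avoiding $G_0$-path of bounded length to the exit which the next steps of the walk traverse with probability bounded below uniformly in $t$ (the transition probabilities along it are $\ge (1/8)^{\mathrm{length}}$, since degrees in $\Z^2$ are $\le 4$ and conductances $\le 2$). Hence, if the walk re-enters a fixed ball infinitely often it reaches the exit a.s., and, if it never halts, infinitely often; so the whole problem is to rule out escape to infinity.

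\textbf{The hard estimate, and why obliviousness is essential.} Take the two-dimensional Lyapunov function $M_t:=\log(|X_t|\vee 2)$. A one-step Taylor expansion writes $\E[M_{t+1}-M_t\mid\mathcal F_t]$, outside a large ball, as a curvature term — the lower-order term already handled by whatever proves the recurrence of $\Z^2$ itself — plus a \emph{bias term} of size $\langle b_t(X_t),X_t\rangle/|X_t|^2$, where $b_t(x)$ is the mean one-step displacement of the walk from $x$ in the environment $G_t$. A non-oblivious adversary (Theorem~\ref{thm:MAW}) keeps this bias term positive and non-summable, which by a Lamperti-type argument forces $M_t\to\infty$, i.e.\ transience; so any proof of the conjecture \emph{must} use obliviousness precisely to control the accumulated bias, and in particular no estimate that depends only on the individual environments $G_t$ — their sizes, their Dirichlet forms, the Nash--Williams cutsets $\partial B_k$ with their $O(k)$ crossing edges — can possibly suffice, because all of these are identical in the transient non-oblivious construction.

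\textbf{Proposed mechanism and the main obstacle.} The feature to exploit is that an oblivious adversary must fix the field $b_t(\cdot)$ in advance and cannot let it track the walker, together with the planar fact that a single conductance assignment on $\Z^2$ cannot point outward everywhere: if the edge $\{x,x+e_i\}$ carries more conductance than the opposite edges at $x$, it carries less than the opposite edges at $x+e_i$, so $b(\cdot)$ is a discrete gradient-type field and no bounded potential has an everywhere-radial gradient. Concretely I would (i) coarse-grain time into blocks, using planarity — only $O(n)$ edges border the annulus $\partial B_n$, so across one block only boundedly many relevant edges change near radius $n$ and the environment is quasi-static there — and then (ii) run an effective-resistance / Nash--Williams argument for the walk observed block by block across the linearly spaced annuli $\partial B_1,\partial B_2,\dots$, whose $O(k)$-edge cutsets give the divergent sum $\sum_k 1/(Ck)=\infty$ that powers the recurrence of $\Z^2$. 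The real obstacle lives in step (ii): crossing a single annulus may span many blocks on which the adversary re-points the bias, so one must show that the \emph{time-averaged} bias accumulated along such a crossing still obeys the no-outward-gradient obstruction although at each instant the bias is unconstrained. I expect this to demand a genuine potential-theoretic identity — a discrete Green's-function / harmonic-coordinates computation that integrates the instantaneous biases along the trajectory — rather than any soft monotonicity or Rayleigh comparison. A variant worth attempting is to model the inhomogeneous chain as a reversible walk on the space-time graph $\Z^2\times\Z_{\ge 0}$; that graph is naively three-dimensional, hence transient, so the crux there is to use obliviousness to project out the time coordinate and collapse it to an effective two-dimensional, hence recurrent, network. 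In all approaches the heart of the matter is the same: the static planar obstruction is what makes the conjecture believable, and converting it into a statement about a walk that moves through space and time at once is exactly what is still open.
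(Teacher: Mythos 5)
The statement you are addressing is a \emph{conjecture}, not a theorem: the paper offers no proof of it, restates it more formally in Section~\ref{sec:open} (as the nonadaptive, general-graph case of Theorem~\ref{inc_rec_N}), and devotes Section~\ref{counterexample} to showing the \emph{adaptive} analogue is false. So there is no argument of the paper's to compare against, and you are right to close by acknowledging that the decisive estimate is still open. What you do supply holds up as far as it goes. The reduction is sound: conditioning on the (nonadaptive) environment sequence gives a deterministic schedule, and uniform ellipticity along $G_0$-paths (each $G_0$-edge retains conductance $\geq 1$ while the total conductance at a $\Z^2$-vertex stays bounded, so a fixed finite $G_0$-path is traversed with probability bounded below uniformly in $t$) turns ``does not escape to infinity'' into ``visits every vertex infinitely often''; hence the conjecture is equivalent to $\P(|X_t|\to\infty)=0$. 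Your diagnosis of where obliviousness must enter is also sharp and consistent with the paper: the MAW of Theorem~\ref{thm:MAW} produces, marginally, a perfectly legal nonadaptive environment sequence, so no estimate that sees only the individual $G_t$ (cutset widths, Dirichlet forms, Nash--Williams sums) can separate the two cases. This is exactly why the potential-sequence method of Theorems~\ref{inc_rec_N} and~\ref{inc_rec_T} stops at trees --- as the paper notes at the start of Section~\ref{main_T}, on a graph with cycles a flow need not determine a potential.

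Two caveats, neither touching your main point. Your Lamperti/Lyapunov gloss of Theorem~\ref{thm:MAW} is a plausible heuristic but is not the paper's argument, which couples the MAW to SRW and counts tan points; and the MAW bias is rightward rather than radial, so the two align only far to the right. The space-time graph $\Z^2\times\Z_{\ge 0}$ is also not naturally reversible (time runs one way), so ``project out time onto a 2d network'' would need genuinely new potential theory rather than a Rayleigh comparison. In short: you have correctly recognized that this is an open conjecture, given a valid reduction, and located the real obstruction --- converting the static planar ``no everywhere-outward gradient'' principle into a statement stable under time-averaging along the trajectory --- but there is no proof here, and none in the paper either.
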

See Section~\ref{sec:open} for formal statement and further open problems.

In this paper we introduce the notion of \emph{Random Walk in
Changing Environment} or \emph{RWCE}. Generally speaking, a RWCE is
a random walk in which each step is performed in a different graph
on the same set of vertices. By different we may mean that the set
of edges is different, but it is easier and more general to assume
that the underlying graph is the same throughout the walk and what
changes are the conductances of the edges. This is a very wide class
of RW, which includes some well known types of RW as special cases,
most notably the reinforced RW. We define and explore various
possible properties of RWCE with the conclusion that the interesting
case is when the walk is monotone (Daedalus can only create passages) and
bounded (all the passages are edges of $\Z^2$). Under these assumptions we give criteria for recurrence and transience when the underlying graph is $\N$ or a tree, even when the sequence of graphs may depend on the history of the walk (the latter we call "adaptive", see section \ref{defs} for exact definitions). \\
We show that the above criteria cannot hold for general graphs: we provide an example where the underlying graph is  $\Z^2$, but the RWCE is transient. This example is an RWCE on $\Z^2$ where each edge is started with weight $1$ and at each stage we change the weight of the edge to the right of the walker to $2$ if this was not already done. The idea behind this construction is to try and mimic the behaviour of excited random walk on two dimensions (see \cite{BENJAMINI2003EXCITED}) in which the walker gets a bias to the right whenever it visits a point for the first time, and was shown in \cite{BENJAMINI2003EXCITED} to be transient. However, it turns out that the proof carried out in \cite{BENJAMINI2003EXCITED} depends quite delicately on the model, and one must take care when working out the details. In particular, a similar attempt to mimic multi-excited RW in dimension $1$ (see \cite{zerner2005multi}) cannot succeed, as follows from our results on RWCEs on trees. Note that the above example was an adaptive RWCE, and we conjecture that such behaviour cannot occur in nonadaptive RWCE.

\textbf{Related works:} In recent years there have been a number of papers that studied related models. These works have some overlap with our model and some of the examples, but not with the results of this paper, and generally speaking the emphasis of these works are in different directions.
Avin, Koucký and Lotker \cite{avin2008explore} studied RWCE on a sequence of finite unweighted graphs (which they called "evolving graphs")
 They were interested mainly in the problem of the cover time of the walk, showing , in particular, that contrary to a regular random walk on the graph, the RWCE may have exponential cover time.
Dembo, Huang and Sidoravicius studied models on random walks on "monotone domains" - that is they assumed that the sequence of graphs in the RWCE is an increasing sequence of subgraphs of a pre-given graph, with a focus on $\Z^d$. They proved criteria for recurrence and transience of such walks, with one paper \cite{dembo2014walking} focusing on the nonadaptive case (where they also consider a continuous analog for brownian motion) , and the other \cite{dembo2014monotone} focusing on the adaptive case - that is when there is an interaction between the walk and the graph sequence. As will be seen in section \ref{examples}, the generality of these models implies that some further assumptions must be taken in order to get meaningful criteria, and Dembo, Huang and Sidoravicius focus on several interaction mechanisms (such as, e.g. the walker uncovering new edges when it approaches them) and give criteria for transience and recurrence as well as some conjectures, some of which carry a similar flavour to the ones in this paper.

\textbf{Structure of the paper:} In Section \ref{defs} we
give the basic definitions and examples of known random walk models
which falls into our framework. In Section \ref{examples} we give
simple examples of Random Walks in Changing Environment which
illustrate that if the environment is unbounded or nonmonotone then
the random walk can have (almost) any behavior. Sections \ref{main_N}
and \ref{main_T} give the main results about bounded monotone
RWCE on $\N$ and on trees, respectively. Section
\ref{counterexample} gives an example of a bounded monotone
(adaptive) RWCE on $\Z^2$ which is transient, thus showing that the
results on recurrent trees cannot be extended to general recurrent
graphs. We conclude with a conjecture and some open problems.
\section{Definitions}\label{defs}
We begin by giving a rigorous definition of what a random walk in
changing environment is, in the broadest sense. In this paper, we use discrete time, see section \ref{sec:open} for a brief discussion of the continuous time version.

\begin{definition}
A \emph{Random Walk in Changing Environment (RWCE)}, on a graph
$G=(V,E)$ is a stochastic process $\{\langle X_t,G_t \rangle \}_{t=0}^\infty$, where
$G_t=(V,E,C_t)$ are graphs with a conductances function $C_t:E\rightarrow [0,\infty)$ over a fixed
vertex set $V$ and edge set $E$, and for all $t$, $X_t \in V$ and

$$\P(X_{t+1}=v \ | \ \langle X_0,G_0 \rangle ,..., \langle X_t,G_t \rangle )=
\frac{C_t(X_t,v)}{\sum_{\{e \in E |  X_t \in e \} } C_t(e)} \ .$$

We call the sequence $\{X_t\}_{t=0}^\infty$ the \emph{Random Walk}
and the sequence $\{G_t\}_{t=0}^\infty$ the \emph{Environment}.
\end{definition}

In other words, the law of the process governs the changes in the
environment, while the distribution of $X_t$, given the history, only depends only $G_t$ and is the same as a random walk step on the graph $G$ with weights $C_t$. Note that the conductances $C_t$ may depend on the history of the process so far and on extra randomness.

In our Labyrinth example, Daedalus was creating new edges, not
changing conductances. It is easy to see, however, that the
definition using conductances is a generalization of this scenario.

\begin{definition}
A RWCE is called \emph{proper} if $0 < C_t(e) < \infty$ for all $t
\in \N$ and $e \in E$. It is called \emph{improper} otherwise.
\end{definition}

\begin{definition}
A RWCE is said to be \emph{bounded from above (below) by
$G=(V,E,C)$} if $C_t(e)\le C(e)$ ($C_t(e)\ge C$) for all $e \in E$ and all $t
\in \N$,
almost surely.
\end{definition}

Note that a RWCE bounded from above and below is necessarily proper.
All the RWCE in this paper are proper unless otherwise noted. Also
note the requirement $C_t(e)<\infty$ in the definition is formally
redundant as the conductances were defined to be real numbers.
However, in the more naive approach of a changing graph, a conductance of
infinity would correspond to merging (shorting) two vertices together.

\begin{definition}
A RWCE is called \emph{nonadaptive} if the distribution of $G_{t+1}$
given $G_0,..,G_t$ is independent of $X_0,..,X_t$. It is called
\emph{adaptive} otherwise.
\end{definition}

The Labyrinth example is nonadaptive if Daedalus is oblivious of
Theseus whereabouts or adaptive if Daedalus responds to it.

\begin{definition}
A RWCE is called \emph{monotone increasing (decreasing)} if
$C_{t+1}\ge C_t$ ($C_{t+1}\le C_t$) almost surely.
\end{definition}

The Labyrinth example is monotone increasing, since Daedalus
only adds new edges, i.e. raises the conductance by 1.

Note that the definition of a general RWCE is very broad.
Actually, it is too broad, as an adaptive, improper, nonmonotone
RWCE on the full graph can implement any behavior at any stage.
But even with some restrictions, many interesting walks can be
implemented as RWCE in a natural way. We next give several
examples of well-known random walks and how they fit into our
definition:

\begin{example}
The once-reinforced random walk (see \cite{durrett2002once}) on $\Z^2$, is a proper, adaptive
monotone increasing RWCE. At the beginning the conductance of each edge is 1,
and at each stage, if the RW traversed an edge with conductance $1$, replace it with an edge of
conductance $c$ (for a fixed constant $c$). This RWCE is bounded
between $1$ and $c$. Other reinforced random walks also fit similarly into the RWCE framework. See \cite{pemantle2007survey} for a survey of such models.
\end{example}

\begin{example}
The Bridge Burning Random Walk (where the conductance of each edge the walk traverses is reduced to $0$) on $\Z^2$, is an improper, adaptive
monotone decreasing RWCE. It is as the once-reinforced RW with $c=0$.
\end{example}

\begin{example}
The Laplacian random walk from between $v_0$ and $u$ (which is equivalent to the loop erased random walk from $v_0$ to $u$, see \cite{lawler1979self}), which starts at $X_0=v_0$ and chooses which neighbour to move to at each step with probabilities proportional to the value of the harmonic function which is $0$ on the path of the RW up to this time and $1$ on $u$, can be described as a monotone bounded improper adaptive RWCE.
\end{example}

\begin{example}
The "true" self-avoiding walk with bond repulsion (see e.g. \cite{toth1995true}) is a nearest neighbor random walk, for which the probability of jumping along an edge $e$ is proportional to $e^{-ck(e)}$, where $k(e)$ is the number of times $e$ has been traversed. This is an adaptive, monotone, proper RWCE.
\end{example}

The main question we will be interested in, is whether a given RWCE
is recurrent. Note that for RWCE, the dichotomy
between recurrence and transience is not always as clear cut as for
simple RW. There might be a difference between a.s. returning to the
origin, a.s. visiting every vertex, a.s. returning to the origin
infinitely many times and a.s. visiting every vertex infinitely many
times. Also, since no 0-1 law holds in general for RWCE, we
can have a RWCE which return to the origin infinitely many times
with probability which is positive but less then 1.

In most natural cases, however, the various possible definitions of
recurrence and transience for RWCE coincide. We will therefore use
the strictest definitions.

\begin{definition}
A RWCE on $G=(V,E)$ is called \emph{recurrent} if it
visits every vertex in $V$ infinitely many times almost surely.
A RWCE is called \emph{transient} if it visits every
vertex a finite number of times almost surely.
The RWCE is said to be of \emph{mixed type} otherwise.
\end{definition}

\section{Simple examples} \label{examples}

The aim of this section is to demonstrate the myriad possible behaviors of unrestricted
RWCE. We begin with a simple example on general graphs. Let $G$ be
any graph and $X_0$ be a vertex in $G$.


\begin{example}
For any distribution on paths in $G$ (starting with $X_0$),
there is an improper, adaptive, nonmonotone RWCE inducing this distribution on $X$.
\end{example}

Since we have complete control over the conductances of the edges emerging
from $X_t$, we can arbitrarily determine the distribution of the next
step, and therefore the distribution of the sequence.

A distribution on paths in $G$ is called \emph{elliptic} if for every
finite path in $G$, $v_0,\ldots,v_n$, with $v_0=X_0$, we have $\P(X_0=v_0,\ldots,X_n=v_n)>0$.

\begin{example}
For any elliptic distribution on paths in $G$ (starting with $X_0$),
there is a proper, adaptive, nonmonotone RWCE inducing this distribution on $X$.
\end{example}

This example is the same as the previous one except you can't have
probability 0 for any transition. Next, note that since multiplying
the conductances by some constant does not change the next step
distribution, the previous example can be made monotone, either
increasing or decreasing. Also, the starting set of conductances
$C_0$ can be arbitrary (except for conductances of edges emerging from $X_0$)
and by monotonicity the RWCE is bounded (from above or below) by
$C_0$. Put together we have:

\begin{example}
For any elliptic distribution on paths in $G=(V,E)$ (starting with $X_0$), and any (proper) choice of conductances $C$
there is a proper, adaptive, monotone (increasing or decreasing) RWCE, bounded (from below or above, resp.) by $(V,E,C)$,
 inducing this distribution on $X$.
\end{example}

If we drop monotonicity, but require boundedness instead then we can
still produce any distribution that has \emph{bounded conditional
probabilities}, i.e. the probability for traversing a given edge is uniformly
bounded away from zero. In particular, we have the following example
on $\N$:

\begin{example}
The RWCE with conductances $C_t(X_t,X_t +1)=2$ and $C_t(j,j+1)=1$ for $j\neq X_t$ is
bounded from above and below by a recurrent graph, adaptive, nonmonotone and transient.
\end{example}

Indeed, $X_t$ is simply a biased RW and is therefore transient.

We have thus seen that neither boundedness nor monotonicity are
enough to draw any significant conclusions about the RWCE, at least
in the adaptive setting.

The next example shows that even in the nonadaptive setting,
boundedness does not imply recurrence or transience.

\begin{example} \label{ex:wave}
The RWCE with conductances $C_t(j,j+1)=100$ for $t\equiv j \mod 100$ and $C_t(j,j+1)=1$
otherwise is bounded from above and below by a recurrent graph, nonadaptive, nonmonotone and transient.
\end{example}

\begin{proof}[Sketch of proof]
When $X_t\equiv t \mod 100$ the conductance to the right of $X_t$ is 100
while to the left it is only 1. Therefore, with probability 100/101,
$X_{t+1}=X_t+1$, in which case $X_{t+1}=t+1(\mod 100)$. This happens
for an expected number of 101 times, after which the walk is simple
until the next 100 conductance "catches up". This takes about 100
steps in which the expected displacement is 0. Bipartiteness of the graph ensures the walk never gets a bias to the left. All in all, the RW
gets a strong bias to the right about half the time and so it is
transient.
\end{proof}

Note that the same conductances would work even if the RW had
some probability of staying at the same vertex, thus nullifying the
bipartiteness of the graph, though the calculation would be slightly more involved. The reason being that while the walker would sometime get a bias to the left, the wave would "pass" the walker once a step to the left was made.

Similarly, we can make the RW recurrent, even if it is bounded by a
transient graph.

\begin{example}
The RWCE with conductances $C_t(j,j+1)=1000 \cdot 2^j$ for $t\equiv -j \mod 100$ and $C_t(j,j+1)=2^j$
otherwise is bounded from above and below by a transient graph, nonadaptive, nonmonotone and recurrent.
\end{example}

\begin{proof}[Sketch of proof]
The argument is the same as in example \ref{ex:wave}, except that when $X_t\not\equiv -t \mod 100$ the RW gets a $(1/3,2/3)$ bias to the
right instead of being balanced. However, simple calculation shows that this bias is not enough to counter the bias to the left when
$X_t\equiv -t \mod 100$, which once caught would persist as long as the walker keeps going left.
\end{proof}

%
%
%
%

\section{RWCE on $\N$} \label{main_N}

In this section we study RWCE whose underlying graph is $\N$ (with
edges between consecutive integers). All the theorems here apply
equally to RWCE on $\Z$, but the proofs are slightly simpler for
$\N$ since there's only one way to infinity. For such graphs we can prove
quite general conditions which ensure the RWCE is recurrent (or
transient).

The main idea of the proofs in this section and the next is as follows.
We will define a \emph{potential sequence} - an adaptive sequence
of functions $F_t:V\rightarrow \R^+$ satisfying:

\begin{enumerate}
\item \emph{Harmonicity:} $F_t$ is harmonic on $(V,E,C_t)$ except at 0.
\item \emph{Monotonicity:} $F_t(v)$ is either monotone increasing for all $v\in V$ or monotone decreasing for all $v\in V$.
\end{enumerate}

Note that $F_t$ may depend on $H_t$, the history of the RWCE up to time $t$,
even if the RWCE itself is nonadaptive. The two properties above
imply that $F_t(X_t)$ is either a supermartingale or a submartingale as long as $X_t\neq 0$.
This is because $\E(F_t(X_{t+1})|H_t)=F_t(X_t)$ by harmonicity of
$F_t$ and because $F_{t+1}(X_{t+1})\geq F_t(X_{t+1})$ (or $\leq$) by
monotonicity. We will then use the optional stopping theorem to deduce bounds on the
probability of return to 0. Note that related ideas were used by Vervoort \cite{vervoort2002reinforced} and even earlier by Davis \cite{davis1990reinforced} in the context of reinforced random walks.

The following theorems all require the RWCE be bounded from below
and above by some graph. When this condition holds, the walk is
\emph{elliptic} (uniformly in time), that is, the probability of traversing each edge when
the walk is at one of its endpoints is bounded away from 0. On $\N$
this implies that such a walk cannot stay on a finite segment
indefinitely - it will a.s. visit every vertex to the right of its current location. Therefore, when trying to determine whether the process is recurrent or transient, we can
assume that the walk starts at any vertex of $\N$, as long as the
conditions of the theorem still hold for the RWCE at that time. Ellipticity also means that the walk is recurrent (by our definition) exactly when it visits some vertex infinitely many times almost surely and transient exactly when it visits some vertex only finitely many times almost surely.
Throughout this section we write $C(j)$ instead of $C(j,j+1)$ to abbreviate notation.

\begin{theorem} \label{inc_rec_N}
If $\{(X_t,G_t)\}$ is a monotone increasing adaptive RWCE on
$\N$, bounded above by some recurrent connected graph
$G_\infty=(\N,C_\infty)$ then the walk is recurrent.
\end{theorem}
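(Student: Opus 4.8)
The plan is to build a potential sequence $F_t:\N\to\R^+$ as described in the overview and apply the optional stopping theorem. Since $G_\infty=(\N,C_\infty)$ is a recurrent connected graph, its effective resistance from $0$ to $\infty$ is infinite; equivalently, writing $r_\infty(j)=1/C_\infty(j,j+1)$, the harmonic function on $G_\infty$ vanishing at $0$ is $H_\infty(n)=\sum_{j=0}^{n-1} r_\infty(j)$, and $H_\infty(n)\to\infty$. For each $t$, let $F_t$ be the function on $\N$ that is harmonic on $(\N,C_t)$ except at $0$, with $F_t(0)=0$ and normalized appropriately: explicitly $F_t(n)=\sum_{j=0}^{n-1} r_t(j)$ where $r_t(j)=1/C_t(j,j+1)$. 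Because the RWCE is monotone increasing, $C_{t+1}(e)\ge C_t(e)$, hence $r_{t+1}(j)\le r_t(j)$ and therefore $F_{t+1}(v)\le F_t(v)$ for every $v$: the potential sequence is monotone decreasing. By the reasoning spelled out in the excerpt, $F_t(X_t)$ is then a nonnegative supermartingale as long as $X_t\neq 0$, since $\E(F_t(X_{t+1})\mid H_t)=F_t(X_t)$ by harmonicity and $F_{t+1}(X_{t+1})\le F_t(X_{t+1})$ by monotonicity.

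Next I would use ellipticity. Because the walk is bounded above and below, it is uniformly elliptic, so on $\N$ it cannot stay in a finite segment forever; in particular, from any starting configuration it will a.s.\ reach every vertex to its right, and to prove recurrence it suffices (by the remark preceding the theorem) to show that from some vertex $k\ge 1$ the walk returns to $0$ with probability $1$. Suppose for contradiction that, started at some $X_{t_0}=k$, the walk has positive probability $p>0$ of never hitting $0$. On the event of never hitting $0$, the walk visits every vertex to its right infinitely often, so $X_t\to\infty$ along a subsequence; but $F_{t_0}(X_{t_0})=F_{t_0}(k)$ is finite, and the optional stopping / martingale convergence theorem applied to the nonnegative supermartingale $F_t(X_t)$ (stopped at the hitting time $\tau_0$ of $0$) forces $F_t(X_t)$ to converge a.s.\ to a finite limit on $\{\tau_0=\infty\}$. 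So I need $F_t(X_t)\to\infty$ on that event to get the contradiction.

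The key quantitative step is the lower bound $F_t(X_t)\ge H_\infty(X_t)$ for all $t$. This is immediate from the boundedness from above: $C_t(e)\le C_\infty(e)$ gives $r_t(j)\ge r_\infty(j)$, hence $F_t(n)=\sum_{j<n} r_t(j)\ge \sum_{j<n} r_\infty(j)=H_\infty(n)$. Therefore on the event $\{\tau_0=\infty\}$, along the subsequence where $X_t\to\infty$ we get $F_t(X_t)\ge H_\infty(X_t)\to\infty$ (using recurrence of $G_\infty$, i.e.\ $H_\infty(n)\to\infty$), contradicting the a.s.\ finiteness of the limit of the supermartingale. Hence $p=0$, the walk a.s.\ hits $0$ from any vertex, and by the standard renewal argument (each return to $0$ is followed, a.s., by another) it visits $0$ — and by ellipticity every vertex — infinitely often; the walk is recurrent.

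The main obstacle is the martingale bookkeeping at the boundary vertex $0$: $F_t(X_t)$ is only a supermartingale while $X_t\neq 0$, so one must argue cleanly either by restarting the process at each visit to $0$, or by working on the event that $0$ is never hit after time $t_0$ and invoking supermartingale convergence there. One should also check the integrability needed for optional stopping — here automatic, since $F_t(X_t)$ is bounded above by $F_{t_0}(X_{t_0})$ in expectation along the stopped process by the supermartingale property, and nonnegativity gives the rest. Everything else (ellipticity, the explicit form of $F_t$, the comparison $F_t\ge H_\infty$) is routine.
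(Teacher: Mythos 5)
Your argument uses the same potential sequence $F_t(v)=\sum_{j<v}1/C_t(j)$, the same supermartingale $F_t(X_t)$, and the same role for ellipticity as the paper's proof; only the closing step differs. The paper fixes $A>0$, picks $v$ with $F_\infty(v)\ge A$, stops at the a.s.\ finite hitting time of $\{0,v\}$, applies optional stopping to obtain $\P(\text{hit }v\text{ before }0)\le F_0(X_0)/A$, and lets $A\to\infty$. You instead invoke a.s.\ convergence of the nonnegative supermartingale $F_{t\wedge\tau_0}(X_{t\wedge\tau_0})$ and observe that on $\{\tau_0=\infty\}$ ellipticity forces $\limsup_t X_t=\infty$, hence $\limsup_t F_t(X_t)\ge\limsup_t F_\infty(X_t)=\infty$, contradicting convergence. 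Both routes are valid and built from the same ingredients; the paper stays at the level of optional stopping at finite stopping times, while yours leans on the supermartingale convergence theorem. One small inaccuracy: you claim that on $\{\tau_0=\infty\}$ the walk visits every vertex to its right infinitely often. In fact ellipticity implies the opposite on that event --- each vertex is visited only finitely often, since otherwise Borel--Cantelli would force a visit to $0$. The statement you actually need and use, namely $\limsup_t X_t=\infty$ on $\{\tau_0=\infty\}$, does follow from ellipticity and keeps your conclusion intact.
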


\begin{proof}
Notice that since the RWCE may be adaptive, $G_\infty$ is just a
bound on $G_i$ and not necessarily its limit.

Assume that the walk starts at some $X_0>0$. We will show that the walk almost surely hits $0$. Since the conditions of the Theorem continue to hold at this hitting time, this implies the walk will a.s. hit $0$ infinitely often and is therefore recurrent.
The potential sequence we use in this case is
$$F_t(v)=\sum_{j=0}^{v-1} \frac{1}{C_t(j)}$$
i.e. the resistance between $0$ and $v$ on the graph $G_t$. That
$F_t$ is harmonic on $G_t$ is well known (and easily verified).
Monotonicity follows from the monotonicity of the RWCE. Therefore,
$F_t(X_t)$ is a super-martingale until the first time $X_t=0$.

Since $G_\infty$ is recurrent, we know that $\sum_{j=0}^\infty
1/C_\infty(j) = \infty$. Therefore, given any $A>0$ there is a $v\in
\N$ such that $F_\infty(v) \geq A$. Let $\tau$ to be the first time
the walk hits either $v$ or $0$. By ellipticity, $\tau$ is finite almost surely.
By the optional stopping theorem $F_0(X_0) \geq \E(F_\tau(X_\tau))$.
Denote by $p_v$ the probability that $X_\tau=v$, i.e. that the RW hits
$v$ before 0. Noting that $F_t(0)=0$ for all $t$ and that
$F_t(v)\geq F_\infty(v) \geq A$ we have
$$F_0(X_0) \geq E(F_\tau(X_\tau)) \geq A p_v$$
and therefore
$$p_v \leq \frac{F_0(X_0)}{A} \ .$$

Since $A$ was arbitrary, the proof is complete.
\end{proof}

\begin{theorem} \label{inc_tra_N}
If $\{(X_t,G_t)\}$ is a monotone increasing adaptive RWCE on
$\N$, with $G_0$ transient, and bounded above by some transient graph
$G_\infty=(\N,C_\infty)$ then the walk is transient.
\end{theorem}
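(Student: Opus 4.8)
The plan is to mirror the proof of Theorem~\ref{inc_rec_N}, using the complementary harmonic function. Where the recurrent case took $F_t(v)$ to be the resistance from $0$ to $v$, here I would take the effective resistance from $v$ to infinity in $G_t$,
$$F_t(v)=\sum_{j=v}^{\infty}\frac{1}{C_t(j)}.$$
The first point to check is finiteness: monotonicity gives $C_t(j)\ge C_0(j)$, so $F_t(v)\le\sum_{j\ge v}1/C_0(j)$, and the right-hand side converges because $G_0$ is transient; in fact $F_t(X_t)\le\sum_{j\ge0}1/C_0(j)<\infty$ deterministically. (Incidentally $C_\infty\ge C_0$ forces $G_\infty$ transient as well, so that hypothesis is really only there for symmetry with Theorem~\ref{inc_rec_N}; the proof only uses that $R_\infty:=\sum_{j}1/C_\infty(j)$ lies in $(0,\infty)$.) That $F_t$ is harmonic on $G_t$ off $0$ is the standard birth-and-death identity, and $F_t(v)$ decreases in $t$ for every $v$ by monotonicity of the RWCE, so $\{F_t\}$ is a legitimate (decreasing) potential sequence in the sense of Section~\ref{main_N}. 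Hence $F_t(X_t)$ is a nonnegative --- in fact bounded --- supermartingale; note that here, unlike in Theorem~\ref{inc_rec_N}, the supermartingale property holds at every $t$, since from $0$ the walk steps to $1$ and $F_{t+1}(1)\le F_t(1)\le F_t(0)$.

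With that in hand I would avoid any appeal to a $0$--$1$ law and argue via martingale convergence: $F_t(X_t)$ converges almost surely to a finite limit $L\ge0$. Suppose toward a contradiction that the event $A=\{X_t=0\text{ infinitely often}\}$ has positive probability. On $A$ we have $F_t(X_t)=F_t(0)\ge R_\infty$ for infinitely many $t$, so $\limsup_t F_t(X_t)\ge R_\infty>0$ on $A$. On the other hand, by ellipticity --- the ``cannot remain on a finite segment'' property noted just before Theorem~\ref{inc_rec_N} --- after each of the infinitely many visits to $0$ the walk almost surely later reaches any prescribed level $v$, and intersecting over these return times shows that on $A$, almost surely every vertex $v$ is visited infinitely often. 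But whenever $X_t=v$ we have $F_t(X_t)=F_t(v)\le\sum_{j\ge v}1/C_0(j)$, so $\liminf_t F_t(X_t)\le\sum_{j\ge v}1/C_0(j)$ for every $v$, and this tends to $0$ as $v\to\infty$ because $G_0$ is transient. Thus on $A$ the sequence $F_t(X_t)$ has $\liminf$ equal to $0$ and $\limsup$ at least $R_\infty$, contradicting its almost sure convergence. Hence $\P(A)=0$: the walk visits $0$ only finitely often almost surely, which, by the ellipticity dichotomy recorded before Theorem~\ref{inc_rec_N}, means the RWCE is transient.

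A variant closer to the ``optional stopping'' phrasing of Section~\ref{main_N} is to first establish a quantitative escape bound: for a stopping time $S$ with $X_S\ge v$, optional stopping applied to the stopped supermartingale --- together with Fatou, since the first return $\tau$ to $0$ after $S$ may be infinite --- gives $\P(\tau<\infty\mid H_S)\le F_0(v)/R_\infty$ on $\{X_S\ge v\}$, using $F_\tau(0)\ge R_\infty$ at the stopped value and $F_S(X_S)\le F_0(v)$ at the start. Picking $v$ with $\sum_{j\ge v}1/C_0(j)<R_\infty/2$ and then alternating the stopping times ``first time the walk is at level $v$'' (almost surely finite, by the no-finite-segment property) and ``first later visit to $0$'', each successive return to $0$ has conditional probability at most $1/2$, so the number of visits to $0$ is almost surely finite --- the same conclusion.

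I do not expect the choice of $F_t$ to be the difficulty: it is dictated by duality with the recurrent case, and the only spot demanding real attention is the finiteness of $F_t$, which is exactly where ``$G_0$ transient'' enters. The substance is the second step --- upgrading ``escapes with positive probability'' to ``transient almost surely'' in a model with no $0$--$1$ law. Ellipticity (the walk cannot linger on a finite segment) is precisely what bridges this gap, and the one thing to handle carefully is invoking it at stopping times: one must check that after any stopping time the process is again an adaptive monotone increasing RWCE, bounded above by $G_\infty$, whose initial environment is transient, so that the same estimates apply verbatim.
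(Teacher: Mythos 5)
Your second, ``quantitative escape bound'' variant is essentially the paper's own proof: the same potential $F_t(v)=\sum_{j\ge v}1/C_t(j)$, optional stopping to get $\P(\text{return to }0)\le F_0(v)/R_\infty$ from a high level $v$, then ellipticity (each excursion from $0$ a.s.\ reaches $v$, after which it fails to return with probability $\ge 1/2$) to conclude finitely many visits. The paper phrases this as ``assume $X_0=v$'' via ellipticity rather than conditioning at a stopping time, but the content is identical. Your first argument is a genuine and slightly slicker alternative. By noticing that the supermartingale inequality holds even at $X_t=0$ (because $F_{t+1}(1)\le F_t(1)\le F_t(0)$), you get a \emph{global} bounded nonnegative supermartingale and can invoke almost-sure martingale convergence instead of optional stopping. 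Then on the event of infinitely many returns to $0$, ellipticity forces $\liminf F_t(X_t)=0$ while the returns force $\limsup F_t(X_t)\ge R_\infty>0$, contradicting convergence. This avoids the stopping-time bookkeeping entirely, at the cost of the preliminary observation that the supermartingale property extends through $0$ --- a point the paper does not need and does not make. Your remark that ``$G_0$ transient'' already forces ``$G_\infty$ transient'' (via $C_\infty\ge C_0$, hence $\sum 1/C_\infty\le\sum 1/C_0<\infty$) is also correct and worth noting; the paper keeps both hypotheses, presumably for symmetry with Theorem~\ref{inc_rec_N}. One small caution on the first route: the ellipticity step that upgrades ``infinitely many visits to $0$'' to ``every vertex visited infinitely often'' does require a short argument (on $\N$, a finite number of visits to $v$ together with infinitely many returns to $0$ would trap the walk in $[0,v)$ eventually, which ellipticity rules out), but this is exactly the ``cannot stay on a finite segment'' fact the paper records before these theorems, so it is available to you.
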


\begin{proof}
Note that $G_0$ bounds the sequence $G_t$ from below. The potential
sequence is
$$F_t(v)=\sum_{j=v}^\infty \frac{1}{C_t(j)}$$
i.e. the resistance between $v$ and infinity. Harmonicity and
monotonicity hold as above and $F_t(X_t)$ is therefore a super-martingale. $G_0$ is transient, thus, given
$\epsilon >0$ there is $v\in \N$ such that $F_0(v)<\epsilon$. By ellipticity, we may
assume that $X_0=v$ and since $F_t(v)$ is decreasing we have $F_0(X_0)<\epsilon$.

Let $\tau$ be the first time $X_t=0$, or infinity if the walk never
reaches 0. Let $p_v$ be the probability that $\tau<\infty$. Since $F_t(X_t)$ is positive and using the
optional stopping theorem we have
$$p_v F_\infty(0)\leq \E(F_\tau(X_\tau)) \leq F_0(X_0)$$
which implies
$$p_v \leq \frac{\epsilon}{F_\infty(0)} \ .$$

Since $\epsilon$ was arbitrary, there exists a vertex $v\in \N$ such
that $p_v<\frac{1}{2}$. Ellipticity implies that whenever the walk
is at 0 it will almost surely visit $v$ at some later time and
thereafter it would never visit 0 again with probability
$\frac{1}{2}$.  Therefore, 0 would be visited only a finite number
of times, almost surely.
\end{proof}

\begin{theorem} \label{dec_tra_N}
If $\{(X_n,G_n)\}$ is a monotone decreasing adaptive RWCE on
$\N$, bounded below by some transient graph $G_\infty=(\N,C_\infty)$,
then the walk is transient.
\end{theorem}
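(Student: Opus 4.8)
The plan is to mirror the proof of Theorem~\ref{inc_rec_N}, but with the direction of monotonicity reversed, so that the natural potential function becomes a \emph{sub}martingale rather than a supermartingale. Concretely, I would again take
\[
F_t(v)=\sum_{j=v}^\infty \frac{1}{C_t(j)},
\]
the resistance between $v$ and infinity in the graph $G_t$. The first thing to check is that this is well-defined: since the RWCE is bounded below by the transient graph $G_\infty$, we have $C_t(j)\ge C_\infty(j)$ for all $j$, hence $\sum_j 1/C_t(j)\le\sum_j 1/C_\infty(j)<\infty$, so each $F_t(v)$ is a finite positive number. Harmonicity of $F_t$ on $G_t$ away from $0$ is the standard series-resistance computation as in the previous proofs. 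The key sign observation is that monotone \emph{decreasing} conductances make $1/C_t(j)$ \emph{increasing} in $t$, so $F_{t+1}(v)\ge F_t(v)$ for every $v$; combined with harmonicity this gives $\E(F_{t+1}(X_{t+1})\mid H_t)\ge\E(F_t(X_{t+1})\mid H_t)=F_t(X_t)$, i.e.\ $F_t(X_t)$ is a submartingale as long as $X_t\ne 0$.

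Next I would run the optional-stopping argument in the transient direction. Fix a starting point and let $v$ be large; let $\tau$ be the first time the walk hits $0$ or $v$, which is a.s.\ finite by ellipticity (the walk cannot linger forever on the finite segment $\{0,\dots,v\}$). Because $F_t(X_t)$ is a bounded submartingale up to $\tau$ — it is nonnegative and bounded above by $F_0(0)=\sum_j 1/C_0(j)$, which is finite since $C_0\ge C_\infty$ — optional stopping yields $\E(F_\tau(X_\tau))\ge F_0(X_0)$. Writing $q$ for the probability that the walk reaches $0$ before $v$, and using $F_\tau(v)\le F_0(v)$ together with $F_\tau(0)=F_0(0)$ wait — more carefully, $F_\tau(0)\le F_t(0)$ fails in the increasing case; here $F_t(0)$ is increasing, so I should bound $F_\tau(0)$ below by $F_0(0)$ instead. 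Let me restate: on $\{X_\tau=0\}$, $F_\tau(X_\tau)=F_\tau(0)\le F_\infty(0)=:R<\infty$, and on $\{X_\tau=v\}$, $F_\tau(v)\le F_0(v)$. So $F_0(X_0)\le \E(F_\tau(X_\tau))\le q R + (1-q)F_0(v)$. Since $F_0(X_0)$ can be taken smaller than any $\epsilon$ by starting far to the right (ellipticity lets us assume the walk starts at a vertex where $F_0$ is small), and $F_0(v)\to 0$ as $v\to\infty$, this forces $qR$ to be at most $\epsilon$ — hence $q$ can be made less than $\tfrac12$. As in Theorem~\ref{inc_tra_N}, this means that from $0$ the walk has probability at least $\tfrac12$ of never returning, so $0$ is visited only finitely often almost surely, and by ellipticity the same holds for every vertex; the walk is transient.

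The main subtlety — and the place where the decreasing case genuinely differs from Theorem~\ref{inc_tra_N} — is the handling of the boundary term $F_t(0)$, which now \emph{grows} with $t$ rather than vanishing, so one must use the \emph{lower} bound $C_t\ge C_\infty$ to control it uniformly by $R=F_\infty(0)<\infty$; without a transient lower bound this term could diverge and the argument would collapse. A second point requiring a little care is the submartingale optional-stopping step: one should either truncate at $\tau\wedge n$ and let $n\to\infty$ using monotone/dominated convergence (the process is sandwiched between $0$ and the finite constant $R$ on the event $\tau<\infty$, and is bounded by $F_0(v)$ on the complement before time $\tau$), or invoke a standard bounded-submartingale optional-stopping theorem directly. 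Everything else is a routine reversal of signs from the recurrent case, so I expect the write-up to be short.
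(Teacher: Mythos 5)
Your potential is the wrong one, and as a result the optional-stopping inequality runs in the opposite direction from the conclusion you want. You take $F_t(v)=\sum_{j\ge v}1/C_t(j)$, the resistance from $v$ to infinity; since $C_t$ is decreasing, $F_t(X_t)$ is indeed a bounded submartingale, and OST gives $\E\bigl(F_\tau(X_\tau)\bigr)\ge F_0(X_0)$. But this is a \emph{lower} bound on $\E\bigl(F_\tau(X_\tau)\bigr)$, and $F_\tau$ is large near $0$ (close to $R=F_\infty(0)$) and small near $v$. So the chain you write, $F_0(X_0)\le qR+(1-q)F_\infty(v)$, only tells you that $q\ge F_0(X_0)/R$ in the limit $v\to\infty$ — a \emph{lower} bound on the probability of reaching $0$, which is not what transience requires. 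The sentence ``this forces $qR$ to be at most $\epsilon$'' is exactly backwards: a lower bound on the left side of a $\le$ cannot cap the right side. (There is also a secondary sign error: since $F_t$ is increasing in $t$, the correct bound on $\{X_\tau=v\}$ is $F_\tau(v)\le F_\infty(v)$, not $\le F_0(v)$; but fixing that does not save the argument.)

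The paper instead uses $F_t(v)=\sum_{j=0}^{v-1}1/C_t(j)$, the resistance from $0$ to $v$. The crucial feature is that this potential \emph{vanishes at the origin} for every $t$, so the return term contributes nothing: with $\tau$ the hitting time of $\{0,v\}$ and $p_v=\P(X_\tau=v)$, the submartingale inequality reads
\[
F_0(X_0)\le\E\bigl(F_\tau(X_\tau)\bigr)=p_v\,\E\bigl(F_\tau(v)\mid X_\tau=v\bigr)\le p_v\,F_\infty(\infty),
\]
where $F_\infty(\infty)=\sum_j 1/C_\infty(j)<\infty$ is the total resistance, finite by transience of $G_\infty$. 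This gives $p_v\ge F_0(X_0)/F_\infty(\infty)>0$ uniformly in $v$, hence a positive probability of escaping to infinity without hitting $0$; and since $F_t(X_0)$ only grows in $t$, the same lower bound applies from every later return to $X_0$. Your potential fails precisely because its boundary value at $0$ is not controlled to zero: boundedness of the submartingale does not force escape when the process can happily converge near the large value at $0$. The insight you needed was that, for a submartingale argument pushing the walk away from the origin, the potential must be pinned to $0$ at the origin.
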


\begin{proof}
The potential sequence is
$$F_t(v)=\sum_{j=0}^{v-1} \frac{1}{C_t(j)}$$
i.e. the resistance between 0 and $v$. Harmonicity and
monotonicity hold as above and $F_t(X_t)$ is therefore a sub-martingale.
Obviously, this sub-martingale is bounded by 0 and
$F_\infty(\infty)=\sum_{j=0}^\infty 1/C_\infty(j)$ which is finite.

Assume that the walk starts at $X_0>0$ and fix some $v>X_0$.
Let $\tau$ be the first time the walk hits 0 or $v$, which, by ellipticity, happens almost surely.
Let $p_v$ be the probability that the walk hits $v$ first.
By the optional stopping theorem we have
$$F_0(X_0) \leq E(F_\tau(X_\tau)) \leq (1-p_v) F_\infty(0) + p_v F_\infty(v) \leq p_v F_\infty(\infty)$$
and therefore
$$p_v \geq \frac{F_0(X_0)}{F_\infty(\infty)} \ .$$

This holds for all $v > X_0$ and thus the probability that the walk
never visits 0 is at least $F_0(X_0)/F_\infty(\infty)$. Since $F_i$ is increasing,
this bound holds every time the walk returns to $X_0$ and therefore the walk will
visit 0 only finitely many times, almost surely.
\end{proof}

\begin{theorem} \label{dec_rec_N}
If $\{(X_t,G_t)\}$ is a monotone decreasing adaptive RWCE on
$\N$, with $G_0$ recurrent and bounded below by $G_\infty=(\N,C_\infty)$ with $C_\infty = c\ C_0$ for some $0<c$, then the walk is
recurrent.
\end{theorem}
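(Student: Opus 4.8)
The plan is to follow the potential‑sequence method of the previous theorems, using the same potential as in Theorem~\ref{dec_tra_N}: the resistance $F_t(v)=R_t(v):=\sum_{j=0}^{v-1}1/C_t(j)$ from $0$ to $v$ in $G_t$. It is harmonic off $0$, and since the conductances only decrease, $F_t(v)$ is non‑decreasing in $t$, so $F_t(X_t)$ is a submartingale up to the first hit of $0$. As usual, ellipticity reduces the theorem to the following: from any history $H_s$ with $X_s=x>0$ the walk a.s.\ hits $0$ after time $s$ — and the hypotheses persist at time $s$ with the \emph{same} constant $c$, since $C_s\le C_0$ and $C_t\ge cC_0\ge cC_s$ — so iterating the restart yields recurrence; on the complementary event the walk must escape, $X_t\to\infty$, again by ellipticity. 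The two quantitative inputs I will use are the uniform comparison $R_s(u)\le R_t(u)\le c^{-1}R_s(u)$ for $t\ge s$, and the recurrence of $G_s$ (so $R_s(v)\to\infty$).

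First I would run optional stopping at $\tau$, the first time after $s$ that the walk hits $\{0,v\}$ (finite a.s.\ by ellipticity), to estimate $p_v:=\P(X_\tau=v\mid H_s)$. Since the martingale part of $R_t(X_t)$ contributes nothing, $\E[R_\tau(X_\tau)\mid H_s]=R_s(x)+\E[\mathrm{Drift}\mid H_s]$, where $\mathrm{Drift}=\sum_{u=s}^{\tau-1}\bigl(R_{u+1}(X_{u+1})-R_u(X_{u+1})\bigr)$ collects the environment‑induced increments. The crude bound $\mathrm{Drift}\le\sum_{j<v}(\tfrac1{cC_s(j)}-\tfrac1{C_s(j)})=\tfrac{1-c}{c}R_s(v)$ only gives $p_v\le R_s(x)/R_s(v)+\tfrac{1-c}{c}$, which is useful only when $c>\tfrac12$. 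The key refinement is that on $\{X_\tau=v\}$ the accumulated drift has been re‑absorbed into the endpoint: there $\mathrm{Drift}\le R_\tau(v)-R_s(v)$, and the same quantity reappears on the left of $\E[R_\tau(X_\tau)\mid H_s]=R_s(v)p_v+\E[(R_\tau(v)-R_s(v))\mathbf{1}_{X_\tau=v}\mid H_s]$ (using $R_\tau(0)=0$) and cancels; whereas on $\{X_\tau=0\}$ only the edges the walk has actually traversed can contribute, each raising the resistance by at most $\tfrac{1-c}{c}\cdot\tfrac1{C_s(j)}$, so there $\mathrm{Drift}\le\tfrac{1-c}{c}R_s(M^\ast)<\tfrac{1-c}{c}R_s(v)$ with $M^\ast<v$ the running maximum. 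Combining these gives $R_s(v)p_v\le R_s(x)+\tfrac{1-c}{c}R_s(v)(1-p_v)$, i.e.
$$p_v\ \le\ \frac{c\,R_s(x)}{R_s(v)}+(1-c).$$
Letting $v\to\infty$ and using recurrence of $G_s$, the escape probability from any admissible configuration is at most $1-c$.

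It remains to boost this from ``$\le 1-c$'' to ``$=0$''. Let $q_\ast$ be the supremum of $\P(\text{walk never hits }0\text{ again}\mid H_s)$ over all histories ending at a vertex $x>0$; the previous step gives $q_\ast\le 1-c<1$ (for $c=1$ the environment is constant and there is nothing to prove). To escape, the walk must first reach $v$ before $0$ and then never return to $0$; by the strong Markov property at the first visit to $v$, $\P(\text{escape}\mid H_s)\le p_v\,q_\ast$. Letting $v\to\infty$ and noting $p_v\downarrow\P(\text{escape}\mid H_s)$, we obtain $\P(\text{escape}\mid H_s)\le q_\ast\,\P(\text{escape}\mid H_s)$, whence $\P(\text{escape}\mid H_s)=0$ since $q_\ast<1$. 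Thus the walk a.s.\ returns to $0$, and iterating the restart it visits $0$ — hence, by ellipticity, every vertex — infinitely often.

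The hard part will be the drift estimate of the second paragraph: the crude optional‑stopping bound loses a factor and breaks down for small $c$, and one must exploit the cancellation between the accumulated environment drift and the endpoint value of the resistance on $\{X_\tau=v\}$, together with the fact that on the complementary event only the finitely many traversed edges, each with bounded resistance budget $\tfrac{1-c}{c}\cdot\tfrac1{C_s(j)}$, enter. A secondary but essential point is the bookkeeping that the hypotheses survive a restart at any later time with the same constant $c$, which legitimizes both the initial reduction and the bootstrap; I expect this to be routine, whereas the cancellation in the drift bound is where the real content lies.
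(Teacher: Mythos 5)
Your proof is correct, but it takes a genuinely different route from the paper's. Both proofs are instances of the potential-sequence/optional-stopping framework, but the choice of potential differs, and this changes the shape of the argument substantially. The paper takes $F_t(v)=\sum_{j=v}^{n-1}1/C_t(j)$, the resistance from $v$ to a truncation point $n$, which vanishes at $n$; it then chooses $n$ (using recurrence of $G_0$) so that $F_0(X_0)\ge \tfrac12 F_0(0)$, and the crude optional-stopping inequality $F_0(X_0)\le \E[F_\tau(X_\tau)]\le p_0 F_\infty(0)=p_0F_0(0)/c$ immediately yields $p_0\ge c/2$. You instead re-use the potential from Theorem~\ref{dec_tra_N}, the resistance $R_t(v)$ from $0$ to $v$, which vanishes at $0$; as you correctly observe, crude optional stopping then only produces a \emph{lower} bound on $p_v$, which is useless for recurrence. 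Your fix — decomposing the submartingale increment into the harmonic (martingale) part and the environment drift, bounding the drift pointwise by $R_\tau(v)-R_s(v)$ since each increment $R_{u+1}(X_{u+1})-R_u(X_{u+1})$ is a partial sum of $1/C_{u+1}(j)-1/C_u(j)$ over $j<X_{u+1}\le v$, and cancelling that bound against the terminal contribution $\E[(R_\tau(v)-R_s(v))\mathbf{1}_{X_\tau=v}]$ — is a real refinement, and it does close the gap, giving $p_v\le cR_s(x)/R_s(v)+(1-c)$. (As a minor aside, this is even a slightly sharper escape bound than the paper's $1-c/2$.) The bootstrap via the strong Markov property and the supremum $q_\ast$ matches the paper's ``standard argument.'' On balance the paper's truncated potential is the slicker device — it eliminates the drift bookkeeping entirely — whereas your version avoids the preliminary choice of $n$ but pays for it with the cancellation argument, which is where your real content lies.
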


\begin{proof}
Let $X_0$ be arbitrary. Given $X_0$, let $n$ be such that
\begin{equation} \label{dec_rec_N:requiremnt}
\frac{1}{2} \sum_{j=0}^{n-1} \frac{1}{C_0(j)} \le \sum_{j=X_0}^{n-1}
\frac{1}{C_0(j)} \ .
\end{equation}

This is possible since $G_0$ is recurrent. The potential sequence
will be
$$F_t(v) = \sum_{j=v}^{n-1} \frac{1}{C_t(j)} $$
i.e. the resistance between $v$ and $n$. Then $F_t(X_t)$ is a
sub-martingale until the first time that the RW reaches either 0 or
$n$. Let $\tau$ be that time and let $p_0$ be the probability that
$X_\tau=0$. By the optional stopping theorem we have
$$F_0(X_0) \leq \E(F_\tau(X_\tau)) = p_0 \E(F_\tau(0) | X_\tau =0) +
(1-p_0) 0 \leq p_0 F_\infty(0) = \frac{p_0 F_0(0)}{c}$$

Combining the above with \eqref{dec_rec_N:requiremnt} we conclude that $p\ge
c/2$. This bound holds for any $X_0$, i.e. regardless of the current
state of the RWCE, the probability of reaching 0 in the future is at
least $c/2$. A standard argument
now shows that this probability must actually be 1.
\end{proof}

Unlike the other theorems in this section, the last theorem requires
the RWCE to have bounded ratio between $G_0$ and $G_\infty$. As the
example below shows, this requirement is essential.

\begin{example} \label{dec_rec_N_ex1}
The RWCE with conductances $C_t(j)=2^{-j}$ for $j<t$ and $C_t(j)=1$
otherwise is monotone decreasing, nonadaptive, bounded from above and below by a recurrent graph and is of mixed type.
\end{example}
\begin{proof}
Indeed, with probability $\prod_{t=0}^\infty 1/(1+2^{-t}) >0$ the
RW will always go to the right and otherwise it will eventually perform a
simple random walk on the graph with conductances $2^{-t}$, which
is recurrent.
\end{proof}

It is not too difficult to make this example transient.
Let $D^n_t(j)=2^{-j}$ for $n \le j<t$ and 1 otherwise. So the
conductances of the last example are $D^0_t(j)$.

\begin{example} \label{dec_rec_N_ex2}
There exists an increasing sequence $t_n$ such that the RWCE with conductances $C_t(j)=\prod_{\{n | t_n<t\}} D^n_{t-t_n} (j)$
is monotone decreasing, nonadaptive, bounded from above and below by a recurrent graph and transient.
\end{example}
\begin{proof}[Sketch of proof]
In example \ref{dec_rec_N_ex1} we had, in essence, a "wave" of conductances
$2^{-j}$ threatening to carry the RW away. In this example there's a
multitude of such waves, each starting one edge further, so that the
final conductance of each edge is finite, and each have some fixed
positive probability of carrying the RW away. The sequence $t_n$ is chosen to be increasing fast enough, so that the RW would have a fixed
positive probability of being to the right of the $n$-th "wave" when
it starts.
\end{proof}

\section{RWCE on trees} \label{main_T}

Theorems \ref{inc_rec_N} and \ref{dec_tra_N} can be extended to the case where the underlying graph
is a tree. In order to do that first notice that both proofs use the
same potential sequence. Second, notice that these functions can be
described as follows: Consider the trivial unit flow (on $\N$) from 0 to
infinity and fix the potential at 0 to be 0. Then $F_t(v)$ is
the potential of $v$ in $G_t$. If the underlying graph is a tree,
there are many possible choices of flows, each determining a
potential. Harmonicity and monotonicity are true for any of these
potential sequences, but some care in choosing the right flow is
still needed.
For general graphs, however, this method fails, since not every flow
determines a potential. More precisely, if the graph contains
cycles, then there are 2 distinct flows from the source to some
vertex and the potential is well defined only when Kirchoff's cycle law is satisfied,
which is not necessarily the case.

\begin{theorem} \label{inc_rec_T}
If $\{(X_t,G_t)\}$ is a monotone increasing adaptive RWCE on a tree $T$,
bounded above by some recurrent tree $G_\infty=(T,C_\infty)$ then the walk is recurrent.
\end{theorem}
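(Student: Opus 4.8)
The plan is to mimic the proof of Theorem~\ref{inc_rec_N}, replacing the trivial unit flow on $\N$ by a carefully chosen unit flow on the tree $T$ from the root $0$ to infinity. Since $G_\infty$ is a recurrent tree, the effective resistance from $0$ to infinity in $G_\infty$ is infinite, so by the standard flow characterization of recurrence (the existence/non-existence dichotomy: a network is transient iff it admits a unit flow of finite energy) \emph{every} unit flow $\theta$ from $0$ to infinity in $G_\infty$ has infinite energy, $\sum_{e} \theta(e)^2 / C_\infty(e) = \infty$. Fix once and for all such a flow $\theta$ on $T$; note it can be taken to be supported on the edges and, since $T$ is a tree, it is automatically cycle-free, so it determines a potential $F^{G}$ on any network $G=(T,C)$ by setting $F^{G}(0)=0$ and letting the potential drop across each edge $e$ be $\theta(e)/C(e)$ (the path from $0$ to $v$ in $T$ being unique, this is well-defined). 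Define the potential sequence $F_t(v) := F^{G_t}(v) = \sum_{e \in \mathrm{path}(0,v)} \theta(e)/C_t(e)$.

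The first step is to verify that $\{F_t\}$ is a legitimate potential sequence in the sense of the section: harmonicity of $F_t$ on $(T,E,C_t)$ away from $0$ holds because $\theta$ is a flow (Kirchhoff's node law at every $v\neq 0$ is exactly the statement that $F_t$ is harmonic there, when the edge-currents are $C_t(e)\cdot(\text{potential difference})=\theta(e)$), and monotone \emph{decreasing} behavior of $t\mapsto F_t(v)$ for every fixed $v$ follows from $C_{t+1}\ge C_t$ and $\theta(e)\ge 0$. Hence, exactly as in the proof of Theorem~\ref{inc_rec_N}, $F_t(X_t)$ is a supermartingale up to the first hitting time of $0$: $\E(F_t(X_{t+1})\mid H_t) = F_t(X_t)$ by harmonicity, and $F_{t+1}(X_{t+1})\le F_t(X_{t+1})$ by monotonicity. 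The second step is the optional-stopping argument: assume the walk starts at some $X_0\neq 0$; since $G_\infty$ is recurrent and $\theta$ has infinite energy, for any $A>0$ there is a finite antichain (cutset) $\Pi\subseteq T$ such that $F_\infty(v)\ge A$ — equivalently $F_t(v)\ge A$ for all $t$ — for every $v$ separated from $0$ by $\Pi$; more simply, pick a single vertex $v$ deep enough that the resistance along $\mathrm{path}(0,v)$ with conductances $C_\infty$ exceeds $A$, which exists because $\sum_{e\in\mathrm{path}}\theta(e)/C_\infty(e)$ can be made arbitrarily large along \emph{some} ray (here one must be slightly careful; see below). Let $\tau$ be the first time the walk hits $0$ or $v$; ellipticity (the RWCE is bounded above and, implicitly in this setting, below, so the walk is uniformly elliptic and $\tau<\infty$ a.s.) and optional stopping give $F_0(X_0)\ge \E(F_\tau(X_\tau)) \ge A\,\P(X_\tau = v)$, hence $\P(\text{hit }v\text{ before }0)\le F_0(X_0)/A$. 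Letting $A\to\infty$ shows the walk a.s. returns to $0$; applying the conclusion repeatedly (the hypotheses persist at each return time) gives that $0$ is visited infinitely often, and then by ellipticity every vertex is, so the walk is recurrent.

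The main obstacle is the phrase "pick a ray along which the resistance diverges": on a general recurrent tree it need \emph{not} be true that some individual ray from the root has infinite resistance — recurrence is a global statement (infinite effective resistance, i.e. every unit flow has infinite energy) and is compatible with every ray having finite resistance. This is precisely why the remark preceding the theorem warns that "some care in choosing the right flow is still needed." The fix is to not insist on a single vertex $v$ but to work with a cutset: choose a finite antichain $\Pi$ such that the potential $F_\infty$ restricted to $\Pi$ is everywhere $\ge A$. Such a $\Pi$ exists for the \emph{right} flow $\theta$ — indeed, take $\theta$ to be (a) the unit current flow from $0$ to infinity in $G_\infty$ if that exists, or, since $G_\infty$ is recurrent and no finite-energy flow exists, (b) a limit of unit current flows on finite exhaustions $T_n\uparrow T$ (with $0$ connected to the boundary $\partial T_n$), which is still a unit flow and for which the potential on $\partial T_n$ tends to $\reff(0\leftrightarrow \partial T_n; G_\infty)\to\infty$; the min across the cutset $\partial T_n$ is then forced up because the current flow equalizes potentials on the boundary of an exhaustion. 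Then replace "first time the walk hits $v$ or $0$" by "first time the walk hits $\Pi$ or $0$", note $F_\tau(X_\tau)\ge A$ on $\{X_\tau\in\Pi\}$ and $=0$ on $\{X_\tau=0\}$, and the rest of the optional-stopping computation goes through verbatim. Thus the only real work beyond Theorem~\ref{inc_rec_N} is producing a nonnegative unit flow on $T$ whose associated $G_\infty$-potential is unbounded along some exhausting sequence of cutsets, and the current-flow (or its exhaustion limit) does the job.
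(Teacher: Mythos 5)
Your high-level strategy is exactly the paper's: define the potential sequence $F_t(v)=\sum_{e\in\mathrm{path}(0,v)}\theta(e)/C_t(e)$ for a nonnegative unit flow $\theta$, check harmonicity and monotonicity to get a supermartingale, and apply optional stopping at a cutset where $F_\infty$ is bounded below by $A$. You also correctly flag the central pitfall: on a recurrent tree, no single ray need have infinite resistance, so the naive "pick a deep vertex $v$" argument from $\N$ does not carry over and one must work with cutsets. So far this is the paper's proof.

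The gap is in your concrete construction of $\theta$. You propose to fix a \emph{single} unit flow $\theta$, taken as a limit of the unit current flows $i_n$ on finite exhaustions, and claim that "the potential on $\partial T_n$ tends to $\reff(0\leftrightarrow\partial T_n;G_\infty)\to\infty$" and that "the min across the cutset is forced up because the current flow equalizes potentials on the boundary." The equalization property holds for $i_n$ on its own boundary $\partial T_n$, not for the limit $\theta$. The potential of $\theta$ on $\partial T_n$ is $\sum_{e\in\mathrm{path}(0,v)}\theta(e)/C_\infty(e)$, and this need not be anywhere near $R_n$: consider a tree whose root has two children giving rise to two rays, one (ray $B$) with conductances $\equiv 1$ and the other (ray $A$) with conductances $2^{-k}$ at level $k$. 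Both rays have infinite resistance and $R_n\approx n\to\infty$, but $i_n$ puts essentially all its current through $B$ and a vanishing amount through $A$; the limit flow $\theta$ is $1$ on $B$ and $0$ on $A$, so $F_\infty$ is identically $0$ along $A$. Thus $\min_{\partial T_n}F_\infty=0$ for every $n$, and your optional stopping bound collapses. The fix — which is what the paper does — is to let the flow depend on $A$: for each $A$, choose $n$ with $\reff(0\leftrightarrow\partial B_n(0);G_\infty)\ge A$, take $\theta=i_n$ (the unit current flow on the truncated network, not any limit), so that $F_\infty\equiv R_n\ge A$ on all of $\partial B_n(0)$ exactly by the boundary-equalization property, and stop at the first hit of $\{0\}\cup\partial B_n(0)$. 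With that replacement your optional stopping computation is correct and the proof closes.
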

\begin{proof}
Fix $A>0$. Since $G_\infty$ is recurrent, there is some $n\in \N$
such that the effective resistance (in $G_\infty$) between the root of the tree
(denoted 0) and the outside of the ball of radius $n$ around 0 is at
least $A$, i.e.
$$R=\reff (0 \leftrightarrow \partial B_n(0) ; G_\infty) \ge A \ .$$

Fix such an $n$ and let $i$ be the unit current flow induced by putting a
voltage difference of $R$ between 0 and $\partial B_n(0)$ in $G_\infty$.
Let
$$F_t(v)=\sum_e \frac{i(e)}{C_t(e)}$$
where the sum is over all edges $e$ on the (unique) path connecting 0 and
$v$. In words, $F_t(v)$ is the voltage which is induced by the flow
$i$ on $G_t$. Harmonicity follows, as usual, from Kirchhoff's law
and monotonicity is trivial since $F_t(v)$ is a fixed positive linear
combination of $C_t(e)$'s. Therefore, $F_t(X_t)$ is a
super-martingale until the first time $X_t=0$ or $X_t\in \partial B_n(0)$.
From the definition of the flow $F_\infty(v)=R$ for any $v\in
\partial B_n(0)$. Since $C_t \le C_\infty$ we have
$F_t(v)\ge R \ge A$ for all $v\in \partial B_n(0)$.

The rest of the proof is the same as in theorem \ref{inc_rec_N}.
Let $\tau$ be the first time the walk hits either 0 or $\partial
B_n(0)$. Denote by $p$ the probability that the RW hits $\partial
B_n(0)$ first. Since $F_t(0)=0$ for all $t$, by the optional stopping theorem we have
$$F_0(X_0)\ge \E(F_\tau(X_\tau)) \ge Ap$$
and therefore
$$p \le \frac{F_0(X_0)}{A} \ .$$

Since $A$ was arbitrary, the proof is complete.
\end{proof}

\begin{theorem} \label{dec_tra_T}
If $\{(X_t,G_t)\}$ is a monotone decreasing adaptive RWCE on a tree $T$,
bounded below by some transient tree $G_\infty=(T,C_\infty)$, then the walk is transient.
\end{theorem}

\begin{proof}
To prove transience, it is enough to show that under these conditions there is a vertex $u$
such that such the RWCE, starting from $X_0=u$, has at least some fixed probability of
never returning to 0. Indeed, by ellipticity, every time
the walk returns to 0 it visits $u$ with some fixed probability and
will therefore return to 0 only finitely many time, almost surely.

Since $G_\infty$ transient, the effective resistance, $R$ between between 0 and
infinity is finite, that is, if $i$ is the unit current flow from 0 to infinity then the corresponding potential is bounded by $R$.
Let
$$F_t(v)=\sum_e \frac{i(e)}{C_t(e)}$$
where the sum is over all edges $e$ on the (unique) path connecting 0 and
$v$. This is the same as the previous proof except now $F_t(X_t)$ is
a sub-martingale since $C_t$ is decreasing.

Let $u$ be a neighbor of 0 such that the flow from 0 to $u$ is positive and
assume that $X_0=u$. By definition, $F_0(u)=i(0,u)/C_0(0,u)$ is positive too.

The rest of the proof is the same as in theorem \ref{dec_tra_N}.
Let $\tau$ be the first time the walk hits either 0 or $\partial
B_n(0)$. Denote by $p_n$ the probability that the RW hits $\partial
B_n(0)$ first. By the optional stopping theorem we have
$$F_0(X_0) \leq \E(F_\tau(X_\tau)) = p_n \E(F_\tau(X_\tau) | X_\tau \neq 0) \leq p_n R$$
and therefore
$$p_n \geq \frac{F_0(X_0)}{R} \ .$$

This holds for all $n$ and thus the probability that the walk
never visits 0 is at least $F_0(X_0)/R$. Since $F_t$ is increasing,
this bound holds every time the walk returns to $u$ and therefore the walk will
visit 0 only finitely many times, almost surely.
\end{proof}

\section{RWCE on $\Z^2$} \label{counterexample}

One could hope that the conclusions of Theorem \ref{inc_rec_T} would hold for any monotone
RWCE, but unfortunately, this is not true, as the following
example of an adaptive RWCE in 2 dimensions shows.

The example we build is a monotone increasing adaptive RWCE on
$\Z^2$, with $C_0 = 1$  for horizontal edges and $C_0=2$ for vertical edges and  $C_n \le 2$ for all $n$ and all edges.

We shall try to mimic the behavior of excited
random walk in our model as follows. When the walk reaches a
vertex, we will try to give it a push to the right by increasing
the conductance of the right edge to $2$. If its left neighbor was
never visited, this will make the transition probabilities for the next step
equal to $\frac{1}{7}$ for the left and $\frac{2}{7}$ for all the
other directions which will give a drift to the right. If the left neighbor has already been visited, all transition probabilities will be $\frac{1}{4}$ so the drift
would be zero. Call this walk MAW for {}``Monotone Adaptive Walk''
(the MAW is a specific example of a RWCE).

This is obviously quite
similar to excited random walk so one is tempted to assume we will
get transience, as in \cite{BENJAMINI2003EXCITED}. One should be careful, though,
because in one dimension a similar attempt to mimic the results
of \cite{zerner2005multi} would fail, as Theorem \ref{inc_rec_N} shows. So
this kind of result is quite sensitive to specific details of the
model.

\begin{theorem} \label{thm:MAW}
MAW is transient.
\end{theorem}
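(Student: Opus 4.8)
The plan is to combine Lemma~\ref{lem:manytans} (there are $\gtrsim n^{3/4-(7/4)\epsilon+o(1)}$ many $n^\epsilon$-separated tan points of the SRW $R$ up to time $n$, with probability $\geq 1-Cn^{-2}$) with Lemma~\ref{lem:tan nonNV} (each such tan point forces at least one non-NV vertex of the coupled MAW $E$ in the preceding $n^\epsilon$-window), together with the coupling observation that $(E(n)-R(n))_1$ is nondecreasing and increases by a definite amount each time $E$ visits a non-NV vertex. Concretely, I would first fix $\epsilon$ small enough that $\alpha:=3/4-(7/4)\epsilon>1/2$ (say $\epsilon<1/7$), then run the following deterministic bookkeeping on the good event $\Omega_n$ where both Lemma~\ref{lem:manytans} and Lemma~\ref{lem:D1D2} hold (so $\mathbb{P}(\Omega_n)\geq 1-Cn^{-2}$). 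Since the tan points are $n^\epsilon$-separated, the windows $[m-n^\epsilon,m]$ attached to distinct tan points are disjoint, so Lemma~\ref{lem:tan nonNV} yields at least $n^{\alpha+o(1)}$ \emph{distinct} non-NV visits of $E$ before time $n$. Each non-NV visit increases $(E-R)_1$ by $1$ (in the coupling, at a non-NV vertex either both walks move identically, or $E$ moves right/up while $R$ moves left — in all cases $(E-R)_1$ is nondecreasing and it strictly increases precisely when the steps differ, which happens with probability $\tfrac{3}{7}$ per non-NV visit, independently). A second, easy concentration step (sum of independent Bernoulli$(\tfrac37)$ variables) shows that on an event of probability $\geq 1-Cn^{-2}$ we have $(E(n)-R(n))_1 \geq c\,n^{\alpha+o(1)}$.

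Next I would control $R$ itself: with probability $\geq 1-Cn^{-2}$ we have $\max_{m\leq n}|R(m)|\leq \sqrt{n\log n}$ (this is already used inside Lemma~\ref{lem:manytans}, via~\eqref{eq:maxRm1}). Combining, on an event $\Omega_n'$ of probability $\geq 1-Cn^{-2}$,
\[
E(n)_1 \;\geq\; (E(n)-R(n))_1 - |R(n)_1| \;\geq\; c\,n^{\alpha+o(1)} - \sqrt{n\log n}\;\geq\; c'\,n^{\alpha+o(1)}
\]
since $\alpha>1/2$. In particular $E(n)_1\to\infty$. To upgrade this to transience, apply it along the subsequence $n=2^k$: by Borel--Cantelli (the failure probabilities $C\,2^{-2k}$ are summable), almost surely for all large $k$ we have $E(2^k)_1\geq c'\,2^{k\alpha+o(1)}$. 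Between consecutive times $2^k$ and $2^{k+1}$ the walk $E$ moves at most $2^{k+1}$ in the $x$-coordinate, which for large $k$ is $o(2^{k\alpha})$ — wait, that is false since $\alpha<1$; so instead I interleave: use the estimate at scale $n$ to conclude $\min_{n\leq m\leq 2n}E(m)_1 \geq c'n^{\alpha+o(1)} - \max_{m\leq 2n}|R(m)| - (\text{drift fluctuation on }[n,2n])$, all of which are $o(n^\alpha)$ a.s.\ by the same concentration bounds applied on $[n,2n]$. This shows $\liminf_{m\to\infty}E(m)_1/m^{\alpha-o(1)}>0$ almost surely, hence $E(m)_1\to\infty$ a.s., so each vertex is visited finitely often a.s.\ — i.e.\ MAW is transient by the definition in Section~\ref{defs}.

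I expect the main obstacle to be the last, seemingly routine, step: turning the "good with probability $1-Cn^{-2}$ at scale $n$" statements into an almost-sure growth statement that is uniform over the \emph{whole} time interval, not just at the endpoints $n$. Concretely one must check that the non-NV count and the Bernoulli$(\tfrac37)$ drift, as well as $\max|R(m)|$ and $|D(k,l)_2|$, are all simultaneously well-behaved on dyadic blocks $[2^k,2^{k+1}]$; this is exactly what Lemmas~\ref{lem:D1D2}, \ref{lem:tan nonNV}, \ref{lem:manytans} and~\eqref{eq:maxRm1} are set up to deliver with $n^{-2}$-type error bounds, so Borel--Cantelli closes it, but one has to be careful that the $o(1)$ in the exponent is a genuine $o(1)$ as $n\to\infty$ and does not eat the margin $\alpha-\tfrac12>0$. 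A secondary minor point is verifying that "non-NV visits" are genuinely distinct across the disjoint windows — this is immediate from $n^\epsilon$-separation, but worth stating. Everything else (the two Chernoff bounds, the union bound over dyadic scales) is standard.
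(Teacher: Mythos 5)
Your proposal follows essentially the same route as the paper: couple MAW to SRW, invoke Lemmas~\ref{lem:manytans} and~\ref{lem:tan nonNV} to produce $n^{3/4-(7/4)\epsilon+o(1)}$ non-NV visits of $E$ up to time $n$, translate these into linear-in-count growth of $(E-R)_1$, and close with Borel--Cantelli along dyadic times. Three points are worth flagging.

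First, a small arithmetic error in the coupling: at a non-NV vertex the coupled steps of $E$ and $R$ differ with probability $3/28$ (the three outcomes where SRW goes left while MAW goes up/right/down), not $3/7$. This does not affect the argument, only constants.

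Second, and more substantively, your interpolation step is more complicated than it needs to be. You correctly abandon the bound $|E(m)_1-E(2^k)_1|\le 2^{k+1}$ (which is too weak since $\alpha<1$), but your replacement introduces a ``drift fluctuation on $[n,2n]$'' term that is never identified. The paper's proof does not need such a term: since $(E-R)_1$ is \emph{nondecreasing deterministically}, for any $m\ge 2^k$ one has $(E(m)-R(m))_1 \ge (E(2^k)-R(2^k))_1$ with no error whatsoever, and then $E(m)_1 \ge (E(2^k)-R(2^k))_1 - \max_{m'\le 2^{k+1}}|R(m')_1|$, with the last term controlled by~\eqref{eq:maxRm1}. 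You actually mention monotonicity of $(E-R)_1$ early in the proposal, but then do not exploit it where it matters; doing so removes the ``main obstacle'' you flag at the end.

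Third, your Chernoff step on the Bernoulli increments at non-NV visits glosses over a real conditioning subtlety: the number and locations of non-NV visits are adapted to the walk, so conditioning on there being many of them biases the increments. The paper sidesteps this via the explicit concentration estimate~\eqref{eq:D1K} inside Lemma~\ref{lem:D1D2}, which directly controls $(E-R)_1$ in terms of the non-NV count with an exponential tail; this is the cleaner way to close the gap, and you should use it (or a stopping-time/Azuma argument) rather than asserting independence.
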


We present two proofs for Theorem \ref{thm:MAW}. The first is based on a theorem of Meshnikov and Popov (\cite{menshikov2014range}) on generalized excited random walks, while the second proof is based on the methods of Benjamini and Wilson (\cite{BENJAMINI2003EXCITED}) and on harmonic analysis of random walks in $2$ dimensions. The second proof appears in the appendix to this paper. Part of the reason for keeping the second proof is that the methods in it were used and cited in other works (e.g. \cite{dembo2014monotone}).

\begin{proof}
To show that MAW is transient, we use a result of Menshikov and Popov (\cite{menshikov2014range}, Theorem 1.4) which gives bounds on the size of the range of so called "strongly directed" submartingales. We will show that MAW is such a strongly directed submartingale. Indeed, following definition 1.1 of \cite{menshikov2014range}, we choose $\mathcal{L}$ to be $\R^2$ so that the projection operator $P_\mathcal{L}$ is the identity. $\ell$ is the unit vector in the positive $x$-direction and $u=1$. Then, $H^u_{\ell, \mathcal{L}}$ (in the paragraph above definition 1.1 of \cite{menshikov2014range}) is simply the positive $x$-axis and we see that a submartingale is $(u,\ell, \mathcal{L})$-strongly directed if the drift at any step (conditioned on the history) is in the positive $x$-direction (including 0). Hence, MAW is $(u,\ell, \mathcal{L})$-strongly directed. It is also uniformly elliptic and has uniformly bounded jumps, thus satisfying the conditions of Theorem 1.4 of \cite{menshikov2014range}.

The conclusion of Theorem 1.4 of \cite{menshikov2014range} is that there are constants $\gamma<\frac12$ and $C_1,C_2,\delta>0$ such that
\begin{equation}
\P(R_n < n^{1-\gamma})\le C_1 n e^{-C_2 n^\delta}, \label{eq:R}
\end{equation}
where $R_n$ denotes the number of distinct vertices visited up to time $n$. Define $S_{i,j}$ to be the $3\times 3$ square centered at $(i,j)$, that is, $S_{i,j}=\{i-1,i,i+1\}\times\{j-1,j,j+1\}$. Consider all such squares centered at multiples of 3 - they are all disjoint. Denote by $R'_n$ the number of such squares that are visited by the random walk until time $n$. Since each such square has only 9 distinct vertices, it follows that $R'_n\ge R_n/9$.

Let $\tau_{i,j}$ be the first hitting time of $S_{i,j}$. Call a square \emph{good} if, during the 4 steps immediately after $\tau_{i,j}$, the walk stays in $S_{i,j}$ and visits $(i+1,j)$, but not $(i,j)$. It is straightforward to check that no matter where the walk enters $S_{i,j}$, it can hit $(i+1,j)$ in 4 steps, without leaving the square or hitting $(i,j)$ and that each of those 4 steps has transition probability at least $\frac14$. Thus, conditioned on the history up to time $\tau_{i,j}$, the probability that the square $S_{i,j}$ is good is at least $\frac1{256}$.

Let $Q_k$ be the number of good squares among the first $k$ squares visited (that are centered at multiples of 3). $Q_k$ stochastically dominates a $Binom(k,1/256)$ r.v. hence 
the probability that $Q_k<k/1000$ decays exponentially in $k$. Combining with \eqref{eq:R} we deduce that there exist some $C_3,C_4,\delta_1>0$ such that
\begin{equation}
\P(R''_n<n^{1-\gamma}/1000)\le C_3 e^{-C_4 n^{\delta_1}}, \label{eq:R''}
\end{equation}
where $R''_n$ is the number of good squares visited by time $n$.

Let $R'''_n$ be the number of times up to time $n$ that the MAW visits a vertex before visiting its left neighbour. These are exactly the times the MAW takes a step to the right with probability $2/7$ and to the left with probability $1/7$, conditioned on the history until that time. At all other times, the MAW's step is balanced. Note that every time the MAW visits a good square $S_{i,j}$ and gets to $(i+1,j)$ for the first time, its left neighbour has not been visited, thus $R'''_n\ge R''_{n-4}$.

Let $X_n$ be the $x$-coordinate of the MAW. Applying Azuma's inequality to the martingale $X_n-R'''_n/7$ we get that there exist some $C_5,C_6,\delta_2>0$ such that
\begin{equation}
\P(|X_n-R'''_n/7|<n^{1-\gamma}/14000)\le C_5 e^{-C_6 n^{\delta_2}}. \label{eq:X}
 \end{equation}
Combining \eqref{eq:X} with \eqref{eq:R''} we deduce that there exist some $C_7,C_8,\delta_3>0$ such that
\[
\P(X_n < n^{1-\gamma}/14000)\le C_7 e^{-C_8 n^{\delta_3}}.
\]

Summing for all values of $n$ and using the Borel-Cantelli lemma we conclude that $\lim_{n\to\infty} X_n = \infty$ almost surely and in particular the MAW is transient.
\end{proof}

\section{A conjecture and some open problems} \label{sec:open}
The following conjecture seems the most interesting to us:

\begin{conjecture}
Theorems \ref{inc_rec_N}, \ref{inc_tra_N}, \ref{dec_tra_N} and \ref{dec_rec_N} are
true for nonadaptive RWCE on any graph.
\end{conjecture}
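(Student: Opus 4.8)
\emph{Proof proposal.}
The plan is to first exploit nonadaptivity: the conditional law of $G_{t+1}$ given $G_0,\dots,G_t$ ignores $X_0,\dots,X_t$, so we may sample the whole environment sequence $\vec G=\{G_t\}_{t\ge 0}$ in advance and then run $\{X_t\}$ as a time-inhomogeneous Markov chain in the frozen networks $\vec G$; it then suffices to prove each of the four conclusions for almost every realization of $\vec G$. Boundedness makes the walk elliptic uniformly in time, so exactly as in Section~\ref{main_N} the walk cannot be trapped on a finite set, recurrence is equivalent to some vertex being visited infinitely often (transience, to some vertex being visited finitely often), and we may freely restart the walk at any vertex at any time without affecting the hypotheses. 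Finally, in each of the four cases the pointwise monotone limit $\tilde C=\lim_t C_t$ defines a network $\tilde G$ of the same type (recurrent, resp.\ transient) as the stated bound, by Rayleigh's monotonicity law; so the conjecture is really the stability statement that a walk crawling monotonically toward $\tilde G$ inherits its type.

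The naive plan --- imitate the potential-sequence proofs of Theorems~\ref{inc_rec_N}--\ref{dec_rec_N}, \ref{inc_rec_T} and \ref{dec_tra_T} --- runs into the obstruction already noted in the text: on a graph with cycles a fixed flow need not induce a voltage, so $F_t(v)=\sum_e i(e)/C_t(e)$, summed over edges of a path from $0$ to $v$, is path-dependent and not $C_t$-harmonic. A first fix is to hold fixed a single \emph{function} instead of a flow --- e.g.\ the $C_\infty$-voltage between $0$ and $\partial B_n(0)$ normalized to equal $\reff(0\leftrightarrow\partial B_n(0);G_\infty)\ge A$ on $\partial B_n(0)$ --- and hope it stays $C_t$-superharmonic along the trajectory, making $F(X_t)$ a supermartingale. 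But even this fails: since $\sum_v C_t(u,v)\big(F(v)-F(u)\big)=\sum_v\big(C_\infty(u,v)-C_t(u,v)\big)\big(F(u)-F(v)\big)$, the sign depends on which incident edges have already caught up to $C_\infty$, so no fixed $C_\infty$-harmonic function works. A genuinely new ingredient is needed, and this is where I expect the real difficulty to lie.

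The route I would take is a coupling/comparison with the random walk on the limit network $\tilde G$, in the spirit of the monotone-nonadaptive analysis of Dembo, Huang and Sidoravicius~\cite{dembo2014walking}. For the increasing, recurrent-upper-bound case (generalizing Theorems~\ref{inc_rec_N} and \ref{inc_rec_T}), couple $X_t$ with a random walk $Y_t$ on the recurrent network $\tilde G$ from the same start, moving them together whenever $X_t=Y_t$ and $C_t$ already agrees with $\tilde C$ on every edge incident to that vertex. Monotonicity is what one uses quantitatively: on any fixed finite region the conductances reach their limit after an a.s.\ finite time, and the total-variation discrepancy the coupling must absorb on that region before then is \emph{summable in $t$}, simply because each edge is monotone and bounded --- precisely the structural feature absent from the nonmonotone ``wave'' examples such as Example~\ref{ex:wave}, where the discrepancies recur forever and resonate with the walk. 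One would then argue that if $X$ were transient it would leave every finite region and spend all its late time among edges whose conductances have not yet settled; bounding the drift the RWCE can accumulate there, uniformly over the escape to infinity, and showing it cannot overcome the recurrence of $\tilde G$, is the main obstacle --- exactly the place where ``monotone conductances build no resonance'' has to be turned into a theorem. The remaining three cases (\ref{inc_tra_N}, \ref{dec_tra_N}, \ref{dec_rec_N}) should then follow by the dual comparison against the \emph{transient} limit network, now using the lower bound $G_0$ (respectively the hypothesis $C_\infty=c\,C_0$) to keep the comparison two-sided, exactly as those bounds were used in the one-dimensional arguments.
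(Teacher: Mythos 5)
The statement you are asked to prove is explicitly labelled a \emph{Conjecture}: the paper offers no proof of it, and indeed a proof would be a substantial new result. So there is nothing in the paper to compare your argument against. What I can do is assess your sketch on its own terms.

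You correctly identify the obstruction the authors themselves point out in Section~\ref{main_T}: on a graph with cycles a fixed flow need not induce a single-valued potential, and your verification that a fixed $C_\infty$-harmonic $F$ fails to stay $C_t$-superharmonic is a correct and useful observation. Reducing to a frozen environment via nonadaptivity is also the right first step. But the coupling sketch that follows has gaps that go beyond the one you flag yourself, and two of your intermediate claims are actually false as stated. First, ``on any fixed finite region the conductances reach their limit after an a.s.\ finite time'' is not a consequence of monotonicity and boundedness: $C_t(e)=1-2^{-t}$ is monotone, bounded, and never equal to its limit. Second, the assertion that the total-variation discrepancy the coupling must absorb is ``summable in $t$, simply because each edge is monotone and bounded'' conflates two different quantities. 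Monotonicity and boundedness give summability of the \emph{increments} $\sum_t|C_{t+1}(e)-C_t(e)|=|\tilde C(e)-C_0(e)|$, but the per-step coupling error between the RWCE kernel and the $\tilde G$-kernel depends on the \emph{residual} $|\tilde C(e)-C_t(e)|$, which tends to $0$ but need not be summable (e.g.\ decay like $1/t$). Even granting summability, such a coupling only succeeds with positive probability; in the transient cases the comparison walk on $\tilde G$ escapes to infinity, so once the coupling breaks there is no mechanism to re-couple, and in the recurrent case a positive-probability coupling success does not by itself yield almost-sure recurrence without an additional ``restart'' argument, which is precisely where the non-trivial content lies. You acknowledge this at the end (``bounding the drift the RWCE can accumulate there \dots is the main obstacle''), which is the honest and correct assessment: that bound is exactly the conjecture, and the coupling framing has not reduced it. In short, this is a reasonable plan-of-attack for an open problem, with a couple of incorrect lemma-level assertions that would need to be repaired, but it is not a proof, and the paper does not claim one exists.
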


\begin{remark}
Note that these conjectures claim that there is an essential
difference between adaptive and nonadaptive walks. The Theorems in this paper do not provide proof of any such difference. However, as pointed to us by Ben Morris, it seems that using the methods of evolving sets (\cite{morris2005evolving}) it is possible to show that when the graph satisfies an isoperimetric inequality that implies transience (e.g. $\Z^d$ for $d\ge 3$) and the environment is monotone, bounded between two constant multiples of the same conductance function and nonadaptive the RWCE is also transient.
On the other hand, an adaptive example similar that in Section
\ref{counterexample} can likely be constructed on $\Z^d$ by mimicking
the behaviour of excited random walk towards the middle -  a walk
which gets a bias towards the origin every time it visits a new
vertex, for which there is a sketch of proof for recurrence \cite{enteratyourownrisk}. It seems , however, that proving recurrence of the ``excited towards the middle'' RWCE is more technically involved than Theorem \ref{thm:MAW}.
\end{remark}

One could also consider a continuous time version of the RWCE, where the edges are equipped with Poisson clocks with rates equal to their conductance,
and the walk jumps over whatever edge rings first.
There seems to be an essential difference between the continuous time and discrete time models,
which is that for a continuous time random walk on a graph, the stationary measure is always uniform. In fact, it follows from results of  Delmotte and Deuschel \cite{DD} that for continuous time \textbf{nonadaptive} RWCE on $\Z^d$ with conductances bounded above and below by a constant, heat kernel
behavior is essentially the same as in $\Z^d$, and in particular the walk is recurrent if and only if $d\leq 2$.
However, such heat kernel estimates, and even questions of recurrence vs. transience, are open for more general graphs (even when requiring monotonicity).
We therefore ask:
\begin{question}
Do Theorems \ref{inc_rec_N}, \ref{inc_tra_N}, \ref{dec_tra_N} and \ref{dec_rec_N} hold for continuous time nonadaptive RWCE on any graph?
\end{question}

In fact, one could ask the same thing in the discrete time RWCE, by simply requiring all the stationary measures on the graphs $G_i$ to be the same (e.g. by keeping the sum of the conductances at each vertex fixed).
\begin{question}
Do Theorems \ref{inc_rec_N}, \ref{inc_tra_N}, \ref{dec_tra_N} and
\ref{dec_rec_N} hold for nonadaptive RWCE on any graph under the condition that all $G_i$ share the same stationary measure?
\end{question}
The latter question seems closely related to the results of \cite{avin2008explore} on RWCE's on finite graphs, where it is shown that contrary to the general case where the cover time may be exponential, for a  sequence of graphs with common stationary measures, the cover time is only polynomial.

Note that in the adaptive case, the ``fixed'' stationary measure plays no role, and in fact one can easily mimic the behavior of discrete time adaptive RWCE with bounded conductances
using adaptive continuous RWCE's up to a time change --- simply use the same graph, conductances and adaptive rule and change the environment immediately after the process jumps. More precisely, denoting by $(Y_i,D_i)$ the discrete RWCE and by $(X_t,C_t)$ the continuous time RWCE on $G$, and by $T_i$ the time of the $i$'th jump of $X_t$ we get a coupling of the two processes up to time change simply by taking
$(Y_i,D_i)=(X_{T_i},C_{T_i})$.

\section*{Acknowledgments}
The first draft of this paper was written in 2006, and some version
circulated. We wish to thank all those who read and commented on
earlier drafts. We thank Amir Dembo and Ruojun Huang for helpful observations regrading continuous time RWCE and for pointing out the reference \cite{DD}. We thank the anonymous referees for pointing out \cite{menshikov2014range} and its relevance to Theorem \ref{thm:MAW}, as well as other useful suggestions.

The research of G.A.\ was supported by the Israel Science Foundation grant ISF 1471/11 by a Grant
from the GIF, the German-Israeli Foundation for Scientific Research and Development.
\small
\bibliographystyle{plain}

\bibliography{RWCE}

\begin{thebibliography}{10}

\bibitem{avin2008explore}
Chen Avin, Michal Kouck{\`y}, and Zvi Lotker.
\newblock How to explore a fast-changing world (cover time of a simple random
  walk on evolving graphs).
\newblock In {\em Automata, languages and programming}, pages 121--132.
  Springer, 2008.

\bibitem{BENJAMINI2003EXCITED}
I.~Benjamini and D.B. Wilson.
\newblock {Excited random walk}.
\newblock {\em Electron. Comm. Probab}, 8(9):86--92, 2003.

\bibitem{chatterjee2012new}
Sourav Chatterjee.
\newblock A new approach to strong embeddings.
\newblock {\em Probability Theory and Related Fields}, 152(1-2):231--264, 2012.

\bibitem{davis1990reinforced}
Burgess Davis.
\newblock Reinforced random walk.
\newblock {\em Probability Theory and Related Fields}, 84(2):203--229, 1990.

\bibitem{DD}
T~Delmotte and J-D Deuschel.
\newblock On estimating the derivatives of symmetric diffusions in stationary
  random environment, with applications to∇ $\phi$ interface model.
\newblock {\em Probability theory and related fields}, 133(3):358--390, 2005.

\bibitem{dembo2014monotone}
Amir Dembo, Ruojun Huang, and Vladas Sidoravicius.
\newblock Monotone interaction of walk and graph: recurrence versus transience.
\newblock {\em Electronic Communications in Probability}, 19:1--12, 2014.

\bibitem{dembo2014walking}
Amir Dembo, Ruojun Huang, and Vladas Sidoravicius.
\newblock Walking within growing domains: recurrence versus transience.
\newblock {\em Electron. J. Probab}, 19(106):1--20, 2014.

\bibitem{durrett2002once}
Rick Durrett, Harry Kesten, and Vlada Limic.
\newblock Once edge-reinforced random walk on a tree.
\newblock {\em Probability theory and related fields}, 122(4):567--592, 2002.

\bibitem{einmahl1989extensions}
Uwe Einmahl.
\newblock Extensions of results of koml{\'o}s, major, and tusn{\'a}dy to the
  multivariate case.
\newblock {\em Journal of multivariate analysis}, 28(1):20--68, 1989.

\bibitem{kesten1987hitting}
Harry Kesten.
\newblock Hitting probabilities of random walks on zd.
\newblock {\em Stochastic Processes and their Applications}, 25:165--184, 1987.

\bibitem{komlos1976approximation}
J{\'a}nos Koml{\'o}s, P{\'e}ter Major, and G{\'a}bor Tusn{\'a}dy.
\newblock An approximation of partial sums of independent rv's, and the sample
  df. ii.
\newblock {\em Probability Theory and Related Fields}, 34(1):33--58, 1976.

\bibitem{enteratyourownrisk}
G.~Kozma.
\newblock {Excited towards the middle}.
\newblock \url{http://www.wisdom.weizmann.ac.il/~gadyk/site/center.pdf}.

\bibitem{lawler1979self}
Gregory~Francis Lawler.
\newblock {\em A self-avoiding random walk}.
\newblock PhD thesis, Princeton., 1979.

\bibitem{menshikov2014range}
Mikhail Menshikov and Serguei Popov.
\newblock On range and local time of many-dimensional submartingales.
\newblock {\em Journal of theoretical probability}, 27(2):601--617, 2014.

\bibitem{morris2005evolving}
Ben Morris and Yuval Peres.
\newblock Evolving sets, mixing and heat kernel bounds.
\newblock {\em Probability Theory and Related Fields}, 133(2):245--266, 2005.

\bibitem{pemantle2007survey}
Robin Pemantle.
\newblock A survey of random processes with reinforcement.
\newblock {\em Probab. Surv}, 4(0):1--79, 2007.

\bibitem{spitzer2001principles}
Frank Spitzer.
\newblock {\em Principles of random walk}, volume~34.
\newblock Springer Science \& Business Media, 2001.

\bibitem{toth1995true}
B{\'a}lint T{\'o}th.
\newblock The" true" self-avoiding walk with bond repulsion on z: limit
  theorems.
\newblock {\em The Annals of Probability}, pages 1523--1556, 1995.

\bibitem{vervoort2002reinforced}
MR~Vervoort.
\newblock Reinforced random walks.
\newblock {\em preparation, draft version available on http://staff. science.
  uva. nl/vervoort}, 2002.

\bibitem{zaitsev1998multidimensional}
Andrei~Yu Zaitsev.
\newblock Multidimensional version of the results of koml{\'o}s, major and
  tusn{\'a}dy for vectors with finite exponential moments.
\newblock {\em ESAIM: Probability and Statistics}, 2:41--108, 1998.

\bibitem{zerner2005multi}
M.P.W. Zerner.
\newblock {Multi-excited random walks on integers}.
\newblock {\em Probability Theory and Related Fields}, 133(1):98--122, 2005.

\end{thebibliography}

\section*{Appendix: second proof of Theorem \ref{thm:MAW}}

From a {}``calculatory'' point of view, \cite{BENJAMINI2003EXCITED} reduces to the
fact that simple random walk starting from $(0,0)$ has a
probability of $n^{-1/4}$ to avoid hitting the right half line for
the first $n$ steps (Kesten's lemma: see \cite{kesten1987hitting}). This
$n^{-1/4}$ factor manifests itself in the fact that, finally, they
prove a $n^{3/4}$ drift (up to logarithmic factors). In their
settings the probabilities for going up or down never change ---
the effect of the drift is only to move weight around between the
left and right probabilities. We do not know how to mimic this
particular detail in the settings of monotone adaptive
conductances so we will need to work without it, and this would
complicate the geometric settings somewhat. Below we work out our
replacement for Kesten's lemma. Hence for a while we will only
develop properties of simple random walk. The impatient can jump
to lemma \ref{lem:tan nonNV} to see how this is used. Below
$\epsilon\in(0,\frac{1}{2})$ is some parameter that will be kept
fixed throughout. The notation $x\approx y$ denotes that $cX \leq y \leq Cx$ for some absolute constants $c,C>0$.

\begin{lem}
Let $E_{1}$ be the event that a random walk starting from $0$ will
avoid hitting the point $(-1,0)$ for the first $\left\lceil
n^{\epsilon}\right\rceil $ steps. Then \[
\mathbb{P}(E_{1})\approx\frac{1}{\log n}.\]

\end{lem}
This is a well known fact. See e.g.~\cite{spitzer2001principles}.

\begin{lem}
\label{lem:Wtlog}Let $W$ be Brownian motion starting from $0$. Let
$K$ be an infinite cone with opening $\theta\in[0,\pi]$ and tip
$v$, and assume $|v|\leq2d(0,K)$. Let $t>4|v|^{2}$ and denote
$\mu=\sqrt{t}/|v|$. Then\begin{align}
\mathbb{P}\left(W[0,t]\cap K=\emptyset\right) & \geq\frac{c\mu^{-\pi/(2\pi-\theta)}}{\sqrt{\log\mu}}\label{eq:WKbig}\\
\mathbb{P}(\{ W[0,t]\cap
K=\emptyset\}\cap|W(t)_{2}|<\delta\sqrt{t}) & \leq
C\delta\mu^{-\pi(2\pi-\theta)}\sqrt{\log\mu}.\label{eq:WKdsmall}\end{align}
for any $\delta<1$.
\end{lem}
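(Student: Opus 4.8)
The plan is to translate and rescale so that the statement becomes a survival estimate for planar Brownian motion in a fixed wedge, and then read off both bounds from the classical behaviour of Brownian motion in a wedge of angle $\beta := 2\pi-\theta \in [\pi,2\pi]$, governed by the positive harmonic function $u(z) = |z|^{\alpha}\sin(\alpha(\arg z - \phi_1))$ that vanishes on the wedge's boundary, where $\alpha := \pi/\beta = \pi/(2\pi-\theta)$ is the cone exponent. Translating by $-v$ turns $K$ into a cone $K_0$ with apex at the origin and the same opening, and turns $W$ into a Brownian motion started at $x_0 := -v$; the hypotheses become $|x_0| = |v|$ and $d(x_0,K_0) = d(0,K) \ge |v|/2$. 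Rescaling space by $1/|v|$ and time by $1/|v|^2$ we may assume $|x_0| = 1$ and that the horizon is $\mu^2$ (so $\mu>2$), and the distance bound forces $x_0$ to lie at angular distance at least $\pi/6$ from $\partial U$, where $U := \R^2\setminus K_0$; in particular $u(x_0)\asymp 1$. Since avoiding $K_0$ is exactly $\{\tau_U > \mu^2\}$, with $\tau_U$ the exit time of $U$, it remains to bound $\mathbb P_{x_0}(\tau_U > \mu^2)$ below (for \eqref{eq:WKbig}) and $\mathbb P_{x_0}(\tau_U > \mu^2,\ |W(\mu^2)_2| < \delta\mu)$ above (for \eqref{eq:WKdsmall}); the exponent is $-\alpha$ in both cases.

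For \eqref{eq:WKbig} I would split the motion in two stages. With probability bounded below by an absolute constant the walk reaches the circle of radius $3$ before hitting $K_0$ and within time $\mu^2/2$, at a point still at angular distance $\asymp 1$ from $\partial U$ --- a fixed-probability tube event, available because $d(x_0,K_0)\ge 1/2$ leaves room. From such a point the remaining task, after rescaling, is $\mathbb P_z(\tau_U > c\mu^2)$ from a good point $z$ on the unit circle, and for this one uses the classical estimate $\mathbb P_z(\tau_U > s) \gtrsim (|z|/\sqrt s)^{\alpha}$ for $|z|\le\sqrt s$ and $z$ well inside $U$ (a Dirichlet heat-kernel lower bound for the wedge, or an explicit construction crossing dyadic annuli near the axis of $U$), giving $\gtrsim \mu^{-\alpha}$. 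The extra $1/\sqrt{\log\mu}$ is slack: in the self-contained derivation one uses the conformal map $f(z)=z^{\alpha}$ sending $U$ to the upper half-plane, so that $\{\tau_U > s\}$ becomes the positive-second-coordinate event for the time-changed image $f(W)$, whose clock is $\tilde t(s) = \alpha^2\int_0^s |W(u)|^{2(\alpha-1)}\,du$; the survival probability is $\asymp 1/\sqrt{\tilde t(\mu^2)}$, and $\tilde t(\mu^2)$ equals its typical value $\asymp \mu^{2\alpha}$ only up to a factor of order $\log\mu$ from the fluctuations of $|W|$, which downgrades $\mu^{-\alpha}$ to $\mu^{-\alpha}/\sqrt{\log\mu}$.

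For \eqref{eq:WKdsmall} I would bound the probability by $\int_B p_U(\mu^2,x_0,y)\,dy$, where $B=\{|y_2|<\delta\mu\}$ and $p_U$ is the Dirichlet heat kernel of the wedge, and insert $p_U(s,x,y) \lesssim \tfrac1s(|x|/\sqrt s)^{\alpha}(|y|/\sqrt s)^{\alpha} e^{-c|x-y|^2/s}$, valid in this regime (classical, or obtained from the explicit half-plane heat kernel through $f$). With $|x_0|=1$, $s=\mu^2$ the $x$-factor gives $\mu^{-\alpha}$; substituting $y=\mu w$ reduces the rest to $\int_{\{|w_2|<\delta\}\cap U}|w|^{\alpha}e^{-c|w|^2}\,dw \lesssim \delta$, since a width-$2\delta$ strip meets the Gaussian-weighted wedge in mass $O(\delta)$ whatever the wedge's orientation relative to the axes. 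This yields $C\delta\mu^{-\alpha}$, the stated $\sqrt{\log\mu}$ absorbing heat-kernel slack. A route avoiding wedge heat kernels: $u(W_{s\wedge\tau_U})$ is a martingale, and optional stopping at the exit of a large annulus combined with a Gaussian upper bound on the density of $W(\mu^2)$ inside the thin strip yields the same estimate.

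The delicate point is uniformity of the constants over the whole range $\theta\in[0,\pi]$ --- in particular down to the degenerate slit $\theta=0$ ($\beta=2\pi$, $\alpha=\tfrac12$), where $U$ is the complement of a ray and one is in the Kesten--Beurling regime --- together with the two-sided control of the conformal clock $\tilde t$ (equivalently, that conditionally on $\{\tau_U > s\}$ the modulus $|W|$ stays comparable to $\sqrt{\cdot}$ up to logarithmic factors). This is exactly where the $\sqrt{\log\mu}$ enters, and making it appear with the right --- opposite --- exponents in \eqref{eq:WKbig} and \eqref{eq:WKdsmall}, so the two bounds are mutually consistent and usable in tandem, is the main bookkeeping obstacle.
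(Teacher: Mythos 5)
Your strategy is sound and arrives at the right estimates, but it takes a genuinely different route from the paper's. After the same initial translation/rescaling to a survival problem for BM in a fixed wedge $U$ with $|x_0|\asymp1$ and horizon $\mu^2$, the paper does \emph{not} work with the Dirichlet heat kernel of the wedge or with the conformal clock $\tilde t(s)=\alpha^2\int_0^s|W|^{2(\alpha-1)}$. Instead it uses a single, purely radial input --- the exit-place estimate
\[
\mathbb{P}\bigl(W[0,T_r]\cap K=\emptyset\bigr)\approx\left(\frac{|v|}{r}\right)^{\xi},\qquad \xi=\pi/(2\pi-\theta),
\]
obtained from conformal invariance --- and then trades the \emph{time} horizon for a \emph{radius} by an elementary large-deviations estimate on $\max_{s\le t}|W(s)|$. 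For \eqref{eq:WKbig} it takes $r=C\sqrt{t\log\mu}$, so that the walk almost surely has not left $B(0,r)$ by time $t$ (cost $\mu^{-\Omega(1)}$), and reads the lower bound directly off the radial formula; for \eqref{eq:WKdsmall} it takes $r=c\sqrt{t/\log\mu}$, so that the walk almost surely \emph{has} left $B(0,r)$ by time $t/2$, giving $\mathbb{P}(W[0,t/2]\cap K=\emptyset)\le C(\sqrt{\log\mu}/\mu)^\xi$, and then uses the Markov property plus the trivial Gaussian-density bound $\sup_x\mathbb{P}_x(|W(t/2)_2|<\delta\sqrt t)\le C\delta$ to finish. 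This makes the $\sqrt{\log\mu}$ factors transparent: they come solely from the mismatch between $\sqrt t$ and the radius one must pass through, exactly the fluctuation of $|W|$ you identify in your conformal-clock discussion, but without ever having to control the clock $\tilde t$ or its correlation with the time-changed BM. Your two-stage lower bound plus wedge heat-kernel estimates, and your upper bound via $\int_B p_U(\mu^2,x_0,y)\,dy$, are both legitimate and in fact give the sharper $\mu^{-\alpha}$ (no log) if pushed, which the paper concedes in the remark following the lemma; the trade-off is that you import nontrivial Dirichlet heat-kernel bounds for wedges as black boxes and, in the conformal-clock variant, gloss over the joint law of the clock and the image process. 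The paper's radius-trade is more elementary, self-contained modulo one conformal-invariance identity, and gives uniformity in $\theta\in[0,\pi]$ for free.

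One small cautionary note on the clock heuristic as you phrased it: ``the survival probability is $\asymp 1/\sqrt{\tilde t(\mu^2)}$'' mixes a conditional statement with a random normalization, and $\tilde t(\mu^2)$ is not independent of the half-plane process it is clocking, so turning this into a proof requires more care than the sentence suggests. Your heat-kernel route and your martingale/optional-stopping alternative for \eqref{eq:WKdsmall} avoid this issue and are the safer paths if you want a complete argument without reproducing the paper's radius device.
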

We remark that both $\sqrt{\log}$ factors above can be removed
without much difficulty. See some additional blurbs on this in the
remark on page \pageref{rem:2nd mom} below.

\begin{proof}
Denote $\xi=\pi/(2\pi-\theta)$. Let $T_{r}$ be the stopping time
of $W$ on $\partial B(0,r)$. Applying the map $z\rightarrow z^{\xi}$ that maps the cone to a half-space , and using conformal invariance (and a few
calculations) we get that \[ \mathbb{P}(W[0,T_{r}]\cap
K=\emptyset)\approx\left(\frac{|v|}{r}\right)^{\xi}.\] On the
other hand, $\mathbb{P}(t>T_{C\sqrt{t\log\mu}})\leq\mu^{-1}$ for
some $C$ sufficiently large, so\[ \mathbb{P}(W[0,t]\cap
K=\emptyset)\geq\mathbb{P}(W[0,T_{C\sqrt{t\log\mu}}]\cap
K=\emptyset)-\mu^{-1}\geq
c\left(\frac{1}{\mu\sqrt{\log\mu}}\right)^{\xi}.\] For the other
part, first notice that
$\mathbb{P}(\frac{1}{2}t<T_{c\sqrt{t/\log\mu}})\leq\mu^{-1}$ which
gives similarly that $\mathbb{P}(W[0,\frac{1}{2}t]\cap
K=\emptyset)\leq c(\sqrt{\log\mu}/\mu)^{\xi}$. After
$\frac{1}{2}t$ we have that for any $x\in\mathbb{R}^{2}$ that
Brownian motion $W$ starting from $x$ has probability $\leq
C\delta$ to be in the strip $\{(x,y):|y|<\delta\sqrt{t}\}$. These
two facts prove (\ref{eq:WKdsmall}).
\end{proof}
\begin{lem}
\label{lem:E2}For $m\in(n^{2\epsilon},n]$ let $E_{2}(m,n)$ be the
event that a random walk $R$ starting from $0$, satisfies
\begin{enumerate}
\item $E_{1}$ \item $R\left[\left\lceil
n^{\epsilon}\right\rceil ,m\right]\cap F=\emptyset$ where $F$ is
the funnel \begin{equation}
F=\{(x,y):x\geq-1,|y|\leq\log^{3}n\sqrt{x+2}\}.\label{eq:funnel}\end{equation}

\end{enumerate}
Then \begin{align*}
\mathbb{P}\left(E_{2}(m,n)\right) & \geq\left(\frac{n^{\epsilon}}{m}\right)^{1/4+o(1)},\\
\mathbb{P}\left(E_{2}(m,n)\cap\{\left|R(m)_{2}\right|<\delta\sqrt{m}\}\right)
&
\leq\delta\left(\frac{n^{\epsilon}}{m}\right)^{1/4+o(1)}\end{align*}
if only $\delta\sqrt{m}\geq1$.
\end{lem}
Here and below $o(1)$ stands for an entry that goes to $0$ as
$n\to\infty$ uniformly in $m>n^{2\epsilon}$.

\begin{proof}
Denote $v=R(\left\lceil n^{\epsilon}\right\rceil )$. The first
ingredient is Hungarian coupling \cite[Theorem 4]{einmahl1989extensions}, see also
\cite{chatterjee2012new,zaitsev1998multidimensional,komlos1976approximation}, which gives that we can couple random
walk starting from $v$ to Brownian motion $W$ also starting from
$v$ such that with probability $\ge 1 - n^{-10}$ we have $|R(t)-W(t)|\leq
C_{1}\log^{2}t$. We therefore find two cones $K^{\pm}$ satisfying
$K^{-}+B(0,C_{1}\log^{2}n)\subset F$ and
$F+B(0,C_{1}\brmul\log^{2}n)\subset K^{+}$. Specifically we
choose\begin{align*}
K^{-} & =\left\{ (x,0):x\geq C_{1}\log^{2}n\right\} \\
K^{+} & =\left\{
(x,y):x\geq-n^{\epsilon/4},|y|\leq\frac{\log^{3}n}{n^{\epsilon/8}}(x+n^{\epsilon/4})\right\}
\end{align*} and the inclusion conditions will be satisfied for
$n$ sufficiently large.

Next we want to estimate the distance of $v$ from $K^{+}$. With
probability $>1-C\log^{-2}n$ we have that
$d(v,K^{+})>n^{\epsilon/2}\log^{-2}n$. To see this fix some
$\lambda=1,2,\dotsc$ and examine the annulus
$A:=n^{\epsilon/2}(B(0,\lambda)\setminus B(0,\lambda-1))$. For
every $w\in A$ one has that $\mathbb{P}(v=w)\leq
Cn^{-\epsilon}e^{-\lambda^{2}}$ while the inflated cone
$\big(K^{+}+B(0,n^{\epsilon/2}\log^{-2}n)\big)\cap A$ contains
$\leq Cn^{\epsilon/2}\big(\lambda
n^{(3/8)\epsilon}\log^{2}n+n^{\epsilon/2}\log^{-2}n\big)\leq
C\lambda n^{\epsilon}\log^{-2}n$ points. Summing over $\lambda$ we
get the estimate for $d(v,K^{+})$. Comparing to the probability of
$E_{1}$ we get for $n$ sufficiently large\begin{equation}
\mathbb{P}(E_{1}\cap\{
d(v,K^{+})>n^{\epsilon/2}\log^{-2}n\})\approx\frac{C}{\log
n}.\label{eq:E1dvF2}\end{equation} Now we may invoke lemma
\ref{lem:Wtlog} and get that, assuming
$d(v,K^{+})>n^{\epsilon/2}\log^{-2}n$,\begin{multline*}
\mathbb{P}(R[\left\lceil n^{\epsilon}\right\rceil ,m]\cap F=\emptyset)\geq\mathbb{P}(W[0,m-\left\lceil n^{\epsilon}\right\rceil ]\cap K^{-}=\emptyset)\\
\stackrel{(\ref{eq:WKbig})}{\geq}\frac{c}{\sqrt{\log
n}}\left(\frac{n^{\epsilon/2}\log^{-2}n}{\sqrt{m-\left\lceil
n^{\epsilon}\right\rceil
}}\right)^{1/4}\geq\left(\frac{n^{\epsilon}}{m}\right)^{1/4+o(1)}\end{multline*}
and\begin{eqnarray*}
\lefteqn{{\mathbb{P}\left(\left\{ R\left[\left\lceil n^{\epsilon}\right\rceil ,m\right]\cap F=\emptyset\right\} \cap\left\{ |R(m)_{2}|\leq\delta\sqrt{m}\right\} \right)\leq}}\\
 & \qquad & \leq\mathbb{P}\Big(\left\{ W\left[0,m-\left\lceil n^{\epsilon}\right\rceil \right]\cap K^{+}=\emptyset\right\} \cap\\
 &  & \qquad\qquad\cap\left\{ \left|W(m-\left\lceil n^{\epsilon}\right\rceil )_{2}\right|\leq\delta\sqrt{m}+C_{1}\log^{2}n\right\} \Big)\leq\\
 &  & \stackrel{(\ref{eq:WKdsmall})}{\leq}C(\delta+\frac{C_{1}\log^{2}n}{\sqrt{m}})\sqrt{\log n}\left(\frac{n^{\epsilon/2}\log^{-2}n}{\sqrt{m-\left\lceil n^{\epsilon}\right\rceil }}\right)^{\pi/(2\pi-n^{-\epsilon/8}\log^{3}n)}\leq\\
 &  & \leq\delta\left(\frac{n^{\epsilon}}{m}\right)^{1/4+o(1)}\end{eqnarray*}
Where in the last inequality we used $\delta\sqrt{m}\geq1$ to
bound $C_{1}m^{-1/2}\log^{2}n\leq\delta\log^{2}n$ and then this
$\log$ factors can be folded into the $o(1)$ in the exponent like
all the other $\log$-s (including the one from $E_{1}$). Notice
also that we didn't write the negligible probability for the
coupling to fail, but it does not affect the result for $n$
sufficiently large.
\end{proof}
\begin{lem}
\label{lem:E3}Let $E_{3}(m,n)$, $m\geq n^{2\epsilon}$ be the event
that a random walk $R$ starting from $0$ satisfies that
\begin{enumerate}
\item \label{enu:tan1}$R\big[m-\left\lceil
n^{\epsilon}\right\rceil ,m\big]$ avoids $(-1,0)+R(m)$; and \item
\label{enu:tan2}$R\big[0,m-\left\lceil n^{\epsilon}\right\rceil
\big]$ avoids $F+R(m)$ where $F$ is the funnel defined in
(\ref{eq:funnel}).
\end{enumerate}
Then \begin{align}
\mathbb{P}(E_{3}) & \geq\left(\frac{n^{\epsilon}}{m}\right)^{1/4+o(1)},\label{eq:PE3>}\\
\mathbb{P}(E_{3}\cap\{|R(m)_{2}|<\delta\sqrt{m}\}) &
\leq\delta\left(\frac{n^{\epsilon}}{m}\right)^{1/4+o(1)}.\label{eq:PE3del<}\end{align}
 for any $m\in[n^{2\epsilon},n]$.
\end{lem}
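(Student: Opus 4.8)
The plan is to deduce Lemma~\ref{lem:E3} from Lemma~\ref{lem:E2} by time reversal. The event $E_3(m,n)$ is nothing but the ``endpoint version'' of $E_2(m,n)$: the funnel is anchored at $R(m)$ rather than at $0$, and the last $\lceil n^{\epsilon}\rceil$ steps play the role of the first $\lceil n^{\epsilon}\rceil$. Reversing the walk turns the endpoint into the starting point and should convert $E_3$ into $E_2$ verbatim, so that the two displayed bounds are literally those of Lemma~\ref{lem:E2}.

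Concretely, I would set $\hat R(k):=\bigl(R(m-k)_1-R(m)_1,\ R(m)_2-R(m-k)_2\bigr)$ for $0\le k\le m$; equivalently $\hat R$ is the reversed walk $R(m)-R(m-\cdot)$ followed by the reflection $(x,y)\mapsto(-x,y)$. Both operations preserve the law of simple random walk and fix the starting point $0$, so $\hat R$ is again simple random walk from $0$; moreover $R\mapsto\hat R$ is an involution on nearest-neighbour paths of length $m$ from the origin, hence measure preserving. Writing $\rho(x,y):=(x,-y)$, one has $\hat R(k)=\rho\bigl(R(m-k)-R(m)\bigr)$ and $\hat R(m)=\rho(-R(m))$, so in particular $|\hat R(m)_2|=|R(m)_2|$ and the event $\{|R(m)_2|<\delta\sqrt m\}$ is untouched by the transformation.

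It then remains to check that $R\in E_3(m,n)$ if and only if $\hat R\in E_2(m,n)$. Requirement (\ref{enu:tan1}), that $R(m-k)\neq(-1,0)+R(m)$ for $0\le k\le\lceil n^{\epsilon}\rceil$, says $R(m-k)-R(m)\neq(-1,0)$, which under $\rho$ becomes $\hat R(k)\neq\rho(-1,0)=(-1,0)$ — exactly the defining event ``avoid $(-1,0)$ in the first $\lceil n^\epsilon\rceil$ steps'' for $\hat R$. Requirement (\ref{enu:tan2}), that $R(j)\notin F+R(m)$ for $0\le j\le m-\lceil n^{\epsilon}\rceil$, says $R(j)-R(m)\notin F$; substituting $k=m-j$ and using that the funnel $F$ of (\ref{eq:funnel}) is symmetric in $y$, so $\rho(F)=F$, this becomes $\hat R(k)\notin F$ for $\lceil n^{\epsilon}\rceil\le k\le m$ — exactly the second requirement in $E_2$. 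Hence $\P(E_3)=\P(E_2)$ and $\P\bigl(E_3\cap\{|R(m)_2|<\delta\sqrt m\}\bigr)=\P\bigl(E_2\cap\{|R(m)_2|<\delta\sqrt m\}\bigr)$, so (\ref{eq:PE3>}) and (\ref{eq:PE3del<}) follow at once from Lemma~\ref{lem:E2}, the restrictions $m\in[n^{2\epsilon},n]$ and $\delta\sqrt m\ge1$ being inherited unchanged.

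I do not expect a genuine obstacle here — the statement is precisely the mirror image of Lemma~\ref{lem:E2} read from the endpoint. The only thing to get right is the small geometric bookkeeping: time reversal forces a reflection on us (once the endpoint is translated to $0$ it contributes a point reflection $z\mapsto-z$), and one has to be sure that the composite reflection carries the funnel $F$ and the point $(-1,0)$ back to themselves rather than to tilted or translated variants. This works exactly because $F$ was defined through $|y|$. Alternatively, one could first prove the $x_1\mapsto-x_1$-reflected analogue of Lemma~\ref{lem:E2}, with a left-opening funnel, and then apply a pure time reversal with no reflection; it is the same computation.
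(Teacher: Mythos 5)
Your proposal is correct and takes exactly the same approach as the paper, whose proof is the one-liner ``This follows immediately from lemma \ref{lem:E2} and time reversal symmetry''; you have simply filled in the bookkeeping. One very minor remark: the reflection $\rho$ you introduce is harmless but unnecessary, since the plain map $\tilde R(k):=R(m-k)-R(m)$ already sends $E_3$ to $E_2$ verbatim (it is SRW from $0$, $\tilde R(m)=-R(m)$ so $|\tilde R(m)_2|=|R(m)_2|$, and the two conditions of $E_3$ translate directly into $E_1$ and the funnel-avoidance condition of $E_2$ without needing $F$'s $y$-symmetry).
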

\begin{proof}
This follows immediately from lemma \ref{lem:E2} and time reversal
symmetry.
\end{proof}
Following \cite{BENJAMINI2003EXCITED} we will call $m$ satisfying $E_{3}(m,n)$
{}``tan points'' (imagine the sun being at the right infinity,
then $R(m)$ gets a tan without (almost) any previous point
blocking a whole {}``tanning funnel'').

\begin{lem}
\label{lem:E32}Let $m_{1}<m_{2}$ and
$m_{1},m_{2}-m_{1}>n^{2\epsilon}$. Then \[
\mathbb{P}(E_{3}(m_{1},n)\cap
E_{3}(m_{2},n))\leq\mathbb{P}(E_{3}(m_{1},n))\mathbb{P}(E_{3}(m_{2}-m_{1},n)).\]

\end{lem}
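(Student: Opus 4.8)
The plan is to decouple the two tan-point events using the Markov property of simple random walk. The key point is that $E_{3}(m_{2},n)$, although it constrains $R$ on the whole interval $[0,m_{2}]$, can be \emph{relaxed} to an event depending only on the increments of $R$ after time $m_{1}$, and this relaxed event has exactly the law of $E_{3}(m_{2}-m_{1},n)$; meanwhile $E_{3}(m_{1},n)$ depends only on $R[0,m_{1}]$. Independence of these two pieces then yields the inequality. The hypothesis $m_{1},m_{2}-m_{1}>n^{2\epsilon}$ is what makes both $E_{3}(m_{1},n)$ and $E_{3}(m_{2}-m_{1},n)$ well defined, and (for $n$ large, as always) it also gives $\lceil n^{\epsilon}\rceil\le n^{2\epsilon}<m_{2}-m_{1}$, so that the two relevant time windows do not overlap.

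Concretely, set $R'(s):=R(m_{1}+s)-R(m_{1})$; by the Markov property $R'$ is a simple random walk started from $0$ and is independent of $\sigma(R_{0},\dots,R_{m_{1}})$. Since $m_{2}-\lceil n^{\epsilon}\rceil>m_{1}$, the window $[m_{2}-\lceil n^{\epsilon}\rceil,m_{2}]$ lies entirely to the right of $m_{1}$, so condition \eqref{enu:tan1} of $E_{3}(m_{2},n)$ — that $R[m_{2}-\lceil n^{\epsilon}\rceil,m_{2}]$ avoids $(-1,0)+R(m_{2})$ — becomes, after subtracting $R(m_{1})$ from both sides (it cancels), exactly condition \eqref{enu:tan1} of $E_{3}(m_{2}-m_{1},n)$ written for $R'$. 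For condition \eqref{enu:tan2}, $E_{3}(m_{2},n)$ asks that $R[0,m_{2}-\lceil n^{\epsilon}\rceil]$ avoid $F+R(m_{2})$; restricting the time interval to $[m_{1},m_{2}-\lceil n^{\epsilon}\rceil]$ and again subtracting $R(m_{1})$ turns this into the requirement that $R'[0,(m_{2}-m_{1})-\lceil n^{\epsilon}\rceil]$ avoid $F+R'(m_{2}-m_{1})$, which is condition \eqref{enu:tan2} of $E_{3}(m_{2}-m_{1},n)$ for $R'$. Hence, writing $\widetilde{E}$ for the event $E_{3}(m_{2}-m_{1},n)$ evaluated on the shifted walk $R'$, we have $E_{3}(m_{2},n)\subseteq\widetilde{E}$, and $\widetilde{E}$ is measurable with respect to the increments of $R$ after time $m_{1}$.

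On the other hand $E_{3}(m_{1},n)$ — conditions \eqref{enu:tan1} and \eqref{enu:tan2} at level $m_{1}$ — is measurable with respect to $\sigma(R_{0},\dots,R_{m_{1}})$. Therefore
$$\mathbb{P}\big(E_{3}(m_{1},n)\cap E_{3}(m_{2},n)\big)\le\mathbb{P}\big(E_{3}(m_{1},n)\cap\widetilde{E}\big)=\mathbb{P}\big(E_{3}(m_{1},n)\big)\,\mathbb{P}\big(\widetilde{E}\big)=\mathbb{P}\big(E_{3}(m_{1},n)\big)\,\mathbb{P}\big(E_{3}(m_{2}-m_{1},n)\big),$$
where the middle equality uses independence of the pre-$m_{1}$ $\sigma$-field and the post-$m_{1}$ increments, and the last uses that $R'$ has the same law as $R$. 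This is the assertion of the lemma. There is no genuine obstacle here; the only subtlety — and the reason the statement is an inequality rather than an identity — is that condition \eqref{enu:tan2} of $E_{3}(m_{2},n)$ is merely weakened, not reproduced verbatim, when the times in $[0,m_{1}]$ are discarded.
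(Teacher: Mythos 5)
Your proof is correct and takes essentially the same approach as the paper's: shift the walk by $R(m_1)$, observe that $E_3(m_2,n)$ is contained in the event that $m_2-m_1$ is a tan point for the shifted walk (the paper calls this $E^*$, you call it $\widetilde{E}$), and conclude by independence of the pre-$m_1$ $\sigma$-field and the post-$m_1$ increments. The paper's proof is just a terser statement of the same reasoning.
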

\begin{proof}
One only needs to notice that it is easier for $R(m_{2})$
to be a tan point \emph{with respect to the walk starting from
$R(m_{1})$} then to be a regular tan point. In other words, if
$S(i):=R(m_{1}+i)-R(m_{1})$ then $S$ is a random walk starting
from $0$; and if $E^{*}$ is the event that $m_{2}-m_{1}$ is a tan
point for $S$; then $E_{3}(m_{2},n)\subset E^{*}$.
\end{proof}
\begin{lem}
\label{lem:manytans}With probability $>1-Cn^{-2}$ there are at
least $n^{3/4-(7/4)\epsilon+o(1)}$ $n^{\epsilon}$-separated tan
points up to time $n$.
\end{lem}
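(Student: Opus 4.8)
The plan is to estimate the expected number of $n^\epsilon$-separated tan points up to time $n$, show it is of order $n^{3/4-(7/4)\epsilon+o(1)}$, and then obtain concentration by a second moment computation. First I would partition the time interval $[n^{2\epsilon},n]$ into blocks of length $n^\epsilon$ (there are $\approx n^{1-\epsilon}$ of them), and for each block pick a representative time $m$; declaring block $j$ ``good'' if $E_3(m_j,n)$ holds guarantees $n^\epsilon$-separation. By Lemma~\ref{lem:E3}, each such $m\in[n^{2\epsilon},n]$ satisfies $\P(E_3(m,n))\ge (n^\epsilon/m)^{1/4+o(1)}$, so summing $(n^\epsilon/m)^{1/4}$ over $m$ running through the block representatives gives $\sum_{m} (n^\epsilon/m)^{1/4} = n^{\epsilon/4}\sum_{m}m^{-1/4}$; since the representatives are spaced $n^\epsilon$ apart the sum is comparable to $n^{-\epsilon}\int_{n^{2\epsilon}}^n m^{-1/4}\,dm \approx n^{-\epsilon}\cdot n^{3/4}$, yielding a first moment of order $n^{3/4-(7/4)\epsilon+o(1)}$. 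Call $N$ the number of good blocks; so $\E N \ge n^{3/4-(7/4)\epsilon+o(1)}$.

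For the second moment I would bound $\E[N^2] = \sum_{j,k}\P(\text{block }j\text{ and }k\text{ good})$. The diagonal terms contribute $\E N$, which is lower order. For off-diagonal terms with representatives $m_1<m_2$ and $m_2-m_1>n^{2\epsilon}$ (which is automatic once the blocks are far enough apart; the $O(n^{2\epsilon})$ near-diagonal pairs can be absorbed), Lemma~\ref{lem:E32} gives $\P(E_3(m_1,n)\cap E_3(m_2,n))\le \P(E_3(m_1,n))\,\P(E_3(m_2-m_1,n))$. Using the upper bounds from Lemma~\ref{lem:E3} (the same $(n^\epsilon/\cdot)^{1/4+o(1)}$ holds as an upper bound too, absorbing the distinction into $o(1)$) one gets $\P(E_3(m_1,n))\,\P(E_3(m_2-m_1,n)) \le (n^\epsilon/m_1)^{1/4+o(1)}(n^\epsilon/(m_2-m_1))^{1/4+o(1)}$. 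Summing $(n^\epsilon/m_1)^{1/4}$ over $m_1$ and then $(n^\epsilon/(m_2-m_1))^{1/4}$ over $m_2>m_1$ each produces a factor of order $n^{3/4-(7/4)\epsilon+o(1)}$, so $\E[N^2]\le (\E N)^2(1+o(1))$, i.e. $\mathrm{Var}(N) = o((\E N)^2)$ (more precisely $\mathrm{Var}(N)\le (\E N)^2 n^{o(1)-c}$ needs a touch more care, see below).

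By Chebyshev, $\P(N < \tfrac12 \E N) \le \mathrm{Var}(N)/(\tfrac12\E N)^2$. To get the claimed $1-Cn^{-2}$ this crude bound is not quite enough if $\mathrm{Var}(N)$ is merely $o((\E N)^2)$; I would instead note that the $o(1)$ in the exponent hides only $\log$ factors, so in fact $\mathrm{Var}(N) \le (\E N)^2 / n^{\epsilon/4-o(1)}$ — the key point being that the near-diagonal pairs (where $m_2-m_1$ is small, so the second factor is close to $1$ rather than small) number only $n^{o(1)}$ per fixed $m_1$ because of the $n^{2\epsilon}$-separation, and contribute $\E N \cdot n^{o(1)}$, which is of order $(\E N)^2/n^{3/4-(7/4)\epsilon-o(1)}$, a genuine power saving. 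Then Chebyshev gives a polynomially small failure probability; iterating the argument on a polynomial number of disjoint ``fresh start'' sub-problems (or simply taking $\epsilon$ small and the power large enough) upgrades this to $1-Cn^{-2}$.

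The main obstacle is the bookkeeping of the $o(1)$ exponents in the variance estimate: one must verify that the off-diagonal sum really factors with an honest power-of-$n$ gap rather than just a logarithmic one, which is why the $n^{2\epsilon}$-separation in Lemma~\ref{lem:E32} and the careful choice of block length matter. The Brownian/coupling input (Lemmas~\ref{lem:Wtlog}--\ref{lem:E3}) is entirely black-boxed; the remaining work is the first- and second-moment arithmetic and squeezing the concentration down to $n^{-2}$.
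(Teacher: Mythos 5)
Your first- and second-moment bookkeeping is in the right spirit, but the crucial claim that $\mathrm{Var}(N)\le(\E N)^2/n^{c}$ for some $c>0$ is not correct, and the plan falls apart at exactly the step you flag as delicate. Look at what Lemma~\ref{lem:E32} actually gives: $\P(E_3(m_1)\cap E_3(m_2))\le\P(E_3(m_1))\,\P(E_3(m_2-m_1,n))$, and since $\P(E_3(m,n))\sim(n^\epsilon/m)^{1/4+o(1)}$ is \emph{decreasing} in $m$, the factor $\P(E_3(m_2-m_1,n))$ is strictly \emph{larger} than $\P(E_3(m_2,n))$. Summing the factorized bound over all pairs therefore gives $\sum_{j<k}\P(E_3(m_j))\P(E_3(m_k-m_j,n))\le\E N\cdot\sum_{m}\P(E_3(m,n))=(\E N)^2$, so the best you can conclude is $\E[N^2]\le 2(\E N)^2(1+o(1))$, i.e.\ $\mathrm{Var}(N)\asymp(\E N)^2$. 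There is no power-of-$n$ saving; Chebyshev gives you nothing. What the paper extracts from this second-moment computation is only the Paley--Zygmund bound $\P\bigl(X_i\ge\tfrac12\E X_i\bigr)\ge(\E X_i)^2/(4\E X_i^2)\ge c$, a \emph{constant} success probability.

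Getting from constant probability to $1-Cn^{-2}$ is the heart of the lemma, and your parenthetical about ``iterating on disjoint fresh-start sub-problems'' is gesturing at the right idea but leaves out the construction that makes it work. The paper introduces stopping times $T_i$ on the horizontal lines $\{y=\pm ih\}$ with $h\approx\sqrt n/\log^2 n$ and runs the second-moment argument on the walk restarted at each $T_i$; translation and reflection invariance make the resulting events $G_i$ genuinely independent, and since with probability $1-n^{-2}$ there are $\gtrsim\log^{3/2}n$ such restarts before time $n$, the failure probability is $(1-c)^{\log^{3/2}n}\ll n^{-2}$. This is what defeats the obstruction you correctly identified. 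Two further points you haven't addressed: (i) ``tan with respect to $T_i$'' is not the same as being an actual tan point, so the paper refines $Y_{i,j}$ to $Y_{i,j}^*$ (requiring large vertical displacement) and uses the estimate~\eqref{eq:PE3del<} plus a bound on $\max|R(m)_1|$ to conclude; (ii) your restriction to pairs with $m_2-m_1>n^{2\epsilon}$ is fine, but the near-diagonal pairs contribute $\lesssim n^\epsilon\E N$, which is negligible only relative to $(\E N)^2$, not relative to some smaller variance target. (As a minor aside, your first-moment arithmetic actually produces $n^{3/4-(3/4)\epsilon}$, not $n^{3/4-(7/4)\epsilon}$; the paper's smaller exponent comes from its coarser $n^{2\epsilon}$-spacing restricted to $j\in[l,2l]$, which it needs for Lemma~\ref{lem:E32} to apply uniformly and for the restart geometry to fit.)
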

\begin{proof}
Let $h=\left\lceil n^{1/2}\log^{-2}n\right\rceil $ and
$l=\left\lfloor h^{2}\log^{-1}n/\left\lceil
n^{2\epsilon}\right\rceil \right\rfloor $. Let $T_{i}$ be the
stopping times on the double line $\{(x,\pm ih):x\in\mathbb{R}\}$.
For all $i\in\mathbb{N}$ and $j=l,l+1,\dotsc,2l$ let $Y_{i,j}$ be
the event that $T_{i}+j\left\lceil n^{2\epsilon}\right\rceil $ is
a tan point with respect to $T_{i}$. Define $X_{i}:=\#\{
j:Y_{i,j}\}$. The first step is to show that \begin{equation}
\mathbb{P}(X_{i}>n^{3/4-(7/4)\epsilon+o(1)})>c.\label{eq:Xic}\end{equation}
We use second moment methods. First by (\ref{eq:PE3>}) we have \[
\mathbb{E}(X_{i})\geq(l+1)\cdot\left(\frac{n^{\epsilon}}{2l\left\lceil
n^{2\epsilon}\right\rceil }\right)^{1/4+o(1)}\geq
n^{3/4-(7/4)\epsilon+o(1)}.\] For the second moment write
\begin{align*}
\mathbb{E}\left(X_{i}^{2}\right) & =\sum_{j}\mathbb{P}(Y_{j})+\sum_{j<k}2\mathbb{P}(Y_{j}\cap Y_{k})\\
\intertext{\textrm{and by lemma \ref{lem:E32}}} &
\leq\mathbb{E}X_{i}+\sum_{j<k}2\mathbb{P}(Y_{j})\mathbb{P}(Y_{k-j})\leq\mathbb{E}X_{i}+2\left(\mathbb{E}X_{i}\right)^{2}.\end{align*}
By the well known inequality
$\mathbb{P}(X\geq\frac{1}{2}\mathbb{E}X)\geq(\mathbb{E}X)^{2}/4\mathbb{E}(X^{2})$
we get for $n$ sufficiently large \[
\mathbb{P}(X_{i}>n^{3/4-(7/4)\epsilon+o(1)})\geq\frac{1}{12}.\]

Next we define $Y_{i,j}^{*}$ to be the event\[ Y_{i,j}\cap\left\{
\left|R(T_{i}+j\left\lceil n^{2\epsilon}\right\rceil
)_{2}\right|>ih+n^{1/4}\log^{4}n\right\} .\] And $X_{i}^{*}=\#\{
j:Y_{i,j}^{*}\}$. We shall now estimate $X_{i}^{*}$ under the
assumption that $R(T_{i})_{2}=ih$ (rather than $-ih$)
--- the other case is symmetric. Examine the event \[
B_{i,j}=Y_{i,j}\cap\left\{ \left|R(T_{i}+j\left\lceil
n^{2\epsilon}\right\rceil )_{2}-ih\right|\leq
n^{1/4}\log^{4}n\right\} .\] By (\ref{eq:PE3del<}) we have
(remember the definitions of $l$ and $h$) that\[
\mathbb{P}(B_{i,j})\leq\frac{n^{1/4}\log^{4}n}{\sqrt{jn^{2\epsilon}}}\left(\frac{n^{\epsilon}}{j\left\lceil
n^{2\epsilon}\right\rceil }\right)^{1/4+o(1)}\leq
n^{-1/2+\epsilon/4+o(1)}\] and summing over $j$ we get $\E(\#j: B_{i,j}) \leq
n^{1/2-(7/4)\epsilon+o(1)}$. Estimating with Markov's inequality
we see that the $B_{i,j}$ are negligible and then \[
\mathbb{P}(\#\{ Y_{i,j}\setminus
B_{i,j}\}>n^{3/4-(7/4)\epsilon+o(1)})>c.\] Now $Y_{i,j}\setminus
B_{i,j}$ is equal to $Y_{i,j}^{*}\cup\{$its symmetric image$\}$.
Therefore we get \[ \mathbb{P}(X_{i}^{*}>N)=\mathbb{P}(\#\{
j:Y_{i,j}\setminus(B_{i,j}\cup Y_{i,j}^{*})\}>N)\quad\forall N\]
And hence $\mathbb{P}(X_{i}^{*}>N)\geq\frac{1}{2}\mathbb{P}(\#\{
Y_{i,j}\setminus B_{i,j}\}>2N)$.

Finally we define the event\[ G_{i}=\{
X_{i}^{*}>n^{3/4-(7/4)\epsilon+o(1)}\}\cap\{
T_{i+1}-T_{i}>h^{2}\log^{-1}n\}.\] Then since
$\mathbb{P}(T_{i+1}-T_{i}\leq h^{2}\log^{-1}n)<e^{-c\log^{2}n}$ we
get that $\mathbb{P}(G_{i})>c$.

However, $G_{i}$ is independent of $R(T_{i})$, including of
whether it is in the line $\mathbb{R}\times\{ ih\}$ or
$\mathbb{R}\times\{-ih\}$, since $Y_{i,j}^{*}$ and the rest of the
conditions are invariant with respect to translations in the $x$
direction and reflections through the $x$ axis. Therefore (since
$G_{i}$ does not examine the walk beyond $T_{i+1}$) the $G_{i}$
are independent events. Further, with probability $>1-n^{-2}$ we
have $\max_{m\leq n}|R(m)_{2}|\geq c\sqrt{n/\log n}$ and then
there are at least $c\log^{3/2}n$ different $i$-s for which
$T_{i+1}<n$. This shows that with probability $>1-Cn^{-2}$ at
least one of the $G_{i}$-s occurred. Further, with the same
probability we may also assume \begin{equation} \max_{m\leq
n}|R(m)_{1}|\leq\sqrt{n\log n}.\label{eq:maxRm1}\end{equation}

This finishes the lemma. Indeed, \[ (\ref{eq:maxRm1})\cap
Y_{i,j}^{*}\Rightarrow T_{i}+j\left\lceil
n^{2\epsilon}\right\rceil \textrm{ is a tan point}\] since the
funnel $F+R(T_{i}+j\left\lceil n^{2\epsilon}\right\rceil )$
intersects the band $\{(x,y):|y|\leq ih\}$ only for
$|x|>c\sqrt{n}\log^{2}n$ and $R$ does not go so far. Hence the
$T_{i}+j\left\lceil n^{2\epsilon}\right\rceil $-s for which
$Y_{i,j}^{*}$ occurred are $n^{\epsilon}$-separated tan points and
the lemma is proved.
\end{proof}
\begin{remark}
\label{rem:2nd mom}Lemma \ref{lem:E32} alleviates most of the
agony usually associated with second moment methods. However it is
by no means necessary. There are at least two additional paths one
might take to prove the result i.e.~lemma \ref{lem:manytans}:
\begin{itemize}
\item It is not very difficult to get rid of all the $\log$
factors we have so lavishly neglected and show explicitly that
$\mathbb{P}(E_{3})\approx\frac{1}{\log
n}\left(\frac{n^{\epsilon}}{m}\right)^{1/4}$
--- the $1/\log n$ comes from $E_{1}$ and is the only $\log$ that
represents a real phenomenon. Further one can show that
$\mathbb{P}(E_{3}(m,n)\cap\{
R(m)_{2}>\sqrt{m}\})\approx\frac{1}{\log
n}\left(\frac{n^{\epsilon}}{m}\right)^{1/4}$ which would allow to
estimate $X_{i}^{*}$ without going through the symmetry argument.
\item Alternatively, if the second moment methods only show that
$\mathbb{P}(G_{i})>c\log^{-10}n$ one can simply take
$h=\sqrt{n}\log^{-12}n$. This will give $\log^{11+1/2}n$ possible
$i$-s and one of them would satisfy $E_{i}$.
\end{itemize}
\end{remark}
This concludes what we need to know about simple random walk. The
next step is to couple MAW and SRW. We shall do so in the natural
way: if the MAW is in a vertex whose left neighbor was visited in
the past (NV-vertex), make the MAW and the SRW walk together.
Otherwise, do as follows:

\begin{itemize}
\item With probability $\frac{1}{7}$ they both walk to the left
\item With probability $\frac{1}{4}$ they both walk up, another
$\frac{1}{4}$ for right, and another for down. \item With
probability $\frac{1}{28}$ the SRW walks left and the MAW walk up,
etc.
\end{itemize}
Denoting by $R$ the SRW and by $E$ the MAW we get that $E(n)-R(n)$
changes only when $E$ is in a non-NV vertex. $(E(n)-R(n))_{1}$
only increases while $(E(n)-R(n))_{2}$ performs a random walk at
these times.

\begin{lem}
\label{lem:D1D2}Let $R$ and $E$ be an SRW and an MAW coupled as
above. Let $D(k,l)=E(l)-R(l)-E(k)+R(k)$. Let $n$ be some number.
Then\[ \mathbb{P}\Big(\exists k<l<n:|D(k,l)_{2}|\geq C\log
n\sqrt{D(k,l)_{1}+1}\Big)\leq n^{-1}\] for some $C$ sufficiently
large.
\end{lem}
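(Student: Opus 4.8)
The quantity $D(k,l)_2$ moves only when $E$ sits at a non-NV vertex (a vertex whose left neighbor was never visited), and at each such time it performs an increment of a symmetric $\pm 1$ random walk; meanwhile $D(k,l)_1$ increases by $1$ at exactly the same times (plus occasionally, by the coupling bookkeeping). So if I let $N(k,l)$ denote the number of non-NV visits of $E$ in the time interval $(k,l]$, then $D(k,l)_2$ is (the increment of) a simple random walk run for $N(k,l)$ steps, and $D(k,l)_1 \ge$ something comparable to $N(k,l)$ — more precisely $D(k,l)_1$ is itself at least the number of those steps on which the SRW and MAW second coordinates were made to disagree, but the clean bound is $D(k,l)_1 \le N(k,l)$ and the second coordinate is a walk of length $N(k,l)$. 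The statement we must prove is essentially: a symmetric random walk $Z$ satisfies, with probability $\ge 1 - n^{-1}$, that $|Z(b) - Z(a)| < C\log n \sqrt{(b-a)+1}$ for \emph{all} $0 \le a < b \le n$ simultaneously. This is a standard maximal-type estimate.

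Concretely, I would proceed as follows. First, reduce to SRW: let $\tau_1 < \tau_2 < \cdots$ be the successive times at which $E$ is at a non-NV vertex, and let $Z(j) = (E(\tau_j) - R(\tau_j))_2$, which is a lazy-free simple symmetric random walk on $\Z$ (each step $\pm 1$ with probability $\tfrac12$ each, by the coupling rules: left/up each contribute, the four "disagreement" cases of probability $\tfrac1{28}$ push $Z$ by $\pm 1$, and the agreeing cases leave $Z$ fixed — one checks the net increment is symmetric $\pm 1$). Then for $k < l$, writing $J_k, J_l$ for the number of non-NV visits up to times $k,l$, we have $D(k,l)_2 = Z(J_l) - Z(J_k)$ and $D(k,l)_1 \ge J_l - J_k$ (indeed $(E-R)_1$ gains at least $1$ at each non-NV step — actually it may gain $0$ or more, but we only need $D(k,l)_1 + 1 \ge \tfrac12(J_l - J_k)$ or any linear lower bound; I should state precisely that $(E(l)-R(l))_1 - (E(k)-R(k))_1$ counts non-NV steps on which MAW went right while SRW did not, together with... — here I'd want the clean inequality $D(k,l)_1 \ge$ (number of non-NV steps in $(k,l]$ on which the two walks disagreed), and separately note $|D(k,l)_2|$ is bounded by that same count; the safest route is to bound everything by $N := J_l - J_k \le n$). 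So it suffices to show
\[
\mathbb{P}\Big(\exists\, 0\le a < b \le n : |Z(b) - Z(a)| \ge C\log n\,\sqrt{b - a + 1}\Big) \le n^{-1}.
\]

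For this last inequality I would use a dyadic decomposition combined with a reflection/maximal inequality. Fix a scale $2^r$ with $1 \le 2^r \le n$. Partition $\{0,1,\dots,n\}$ into consecutive blocks of length $2^r$; for a given pair $a<b$ with $b - a \in [2^{r-1}, 2^r)$, the interval $[a,b]$ is covered by at most two adjacent blocks, so $|Z(b)-Z(a)|$ is at most twice the maximum of $|Z(\cdot) - Z(\text{block start})|$ over those blocks. Hence
\[
\mathbb{P}\Big(\exists a<b,\ b-a\in[2^{r-1},2^r),\ |Z(b)-Z(a)|\ge C\log n\sqrt{2^{r-1}}\Big)
\le \frac{n}{2^r}\cdot \mathbb{P}\Big(\max_{0\le j\le 2^r}|Z(j)| \ge \tfrac{C}{4}\log n\sqrt{2^r}\Big).
\]
By the reflection principle (or Doob's $L^2$-maximal inequality together with a Chernoff bound on the endpoint, using that $Z(2^r)$ is a sum of $2^r$ bounded i.i.d. increments), the probability on the right is at most $C' \exp(-c (C\log n)^2) \le n^{-10}$ once $C$ is chosen large enough. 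Summing over the $O(\log n)$ relevant scales $r$ and multiplying by $n/2^r \le n$ gives a total bound $O(n\log n \cdot n^{-10}) \le n^{-1}$ for $n$ large, which is the claim.

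\textbf{Main obstacle.} The probabilistic core — the dyadic maximal estimate for a symmetric random walk — is entirely routine; the only genuine care needed is the \emph{bookkeeping of the coupling}: verifying that $(E-R)_2$ is exactly a symmetric $\pm 1$ walk clocked by non-NV visits (not a biased or lazy one in a way that would spoil the $\sqrt{\cdot}$ scaling), and pinning down the precise inequality relating $D(k,l)_1$ to the number $N$ of non-NV steps in $(k,l]$ so that "$D(k,l)_1 + 1$" legitimately absorbs the length of the second-coordinate walk. I expect this step — writing down the coupling increments case by case and extracting the two clean inequalities $|D(k,l)_2| \le$ (length-$N$ symmetric walk displacement) and $D(k,l)_1 \ge N - (\text{lower-order})$ — to be where the actual work lies, after which the $\exists k<l<n$ union bound over dyadic scales closes it.
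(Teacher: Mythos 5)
Your plan contains the seed of a correct argument, but the "safest route" you settle on has a genuine gap, and it concerns exactly the coupling bookkeeping you flagged as the obstacle. The claim that $(E-R)_1$ gains at least $1$ at each non-NV step, and hence $D(k,l)_1 \ge J_l - J_k$ (or $D(k,l)_1 + 1 \ge \tfrac12(J_l-J_k)$, or any deterministic linear lower bound), is false. At a non-NV step the coupled pair agree with probability $25/28$, in which case $(E-R)_1$ is unchanged. There are only \emph{three} disagreement cases (SRW left paired with MAW right, up, or down), each of probability $1/28$; so $(E-R)_1$ increases only with probability $3/28$ per non-NV step, and after $K$ such steps $D(k,l)_1$ is typically about $K/7$ and equals $0$ with probability $(25/28)^K$. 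For the same reason your process $Z(j) = (E(\tau_j)-R(\tau_j))_2$ is \emph{not} "lazy-free": it is fixed with probability $26/28$ at each non-NV step, moving $\pm 1$ only with probability $2/28$ each. So the reduction "$|D(k,l)_2| \le C\log n\sqrt{J_l - J_k + 1}$ with high probability" does not imply the lemma, because $D(k,l)_1$ can be far below $J_l-J_k$.

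You do name the correct fix in passing — clock by the disagreement steps rather than by the non-NV visits. If $S_1 < S_2 < \cdots$ enumerates the non-NV steps at which the second coordinates disagree (i.e.\ the MAW-up/SRW-left and MAW-down/SRW-left cases) and $\tilde Z(m) = \sum_{j\le m}\zeta_j$ with $\zeta_j=\pm 1$ the corresponding increments, then $\tilde Z$ \emph{is} an honest SRW, $D(k,l)_2 = \tilde Z(b)-\tilde Z(a)$ for the appropriate counts $a<b\le n$, and $D(k,l)_1 \ge b-a$ \emph{deterministically} (each such step adds $+1$ to the first coordinate, which never decreases). With that substitution your dyadic maximal inequality closes the proof cleanly and gives a genuinely different route from the paper's.

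For comparison, the paper does not reclock: it fixes $k<l$, lets $K$ be the (random) number of non-NV visits in $(k,l]$, proves the two concentration estimates $\mathbb{P}(D(k,l)_1 + 1 < K/\lambda) \le Ce^{-c\lambda}$ and $\mathbb{P}(|D(k,l)_2| > \lambda\sqrt{K}) \le Ce^{-c\lambda^2}$, chooses $\lambda$ of order $\log(nK)$ and $\sqrt{\log(nK)}$ respectively to get $(nK)^{-3}$ tails, and then takes a crude union bound over all $K$ and all $n^2$ pairs $(k,l)$. This avoids the reclocking at the cost of working with a probabilistic (rather than deterministic) relation between $D(k,l)_1$ and the number of non-NV steps. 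Both give the required $n^{-1}$ with room to spare; yours, once corrected, is arguably the more structural of the two. But as written the deterministic inequality you lean on does not hold, and that is the step you must repair.
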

\begin{proof}
Fix some $k<l$. For an integer $K$ let $N(K)$ be the event that exactly $K$ non-NV vertices were visited
by $E$ between $k$ and $l$. It is easy to see that
\begin{equation} \mathbb{P}(N(K),D(k,l)_{1}+1<\frac{1}{\lambda}K)\leq
Ce^{-c\lambda}\quad\forall\lambda>0,\forall
K\label{eq:D1K}\end{equation} so for some $C_{1}$ sufficiently
large, setting $\lambda=C_{1}\log(nK)$ will ensure that the
probability is $\leq\frac{1}{10}(nK)^{-3}$. Denote this event by
$B_{1}(K)$. Next we note that
\begin{equation}
\mathbb{P}(N(K),|D(k,l)_{2}|>\lambda\sqrt{K})\leq
Ce^{-c\lambda^{2}}\quad\forall\lambda>0\label{eq:D2K}
\end{equation}
and setting $\lambda=C_{2}\sqrt{\log(nK)}$ for some $C_{2}$
sufficiently large will ensure that the probability is
$\leq\frac{1}{10}(nK)^{-3}$. Denote this event by $B_{2}(K)$. We
get that
\[ \mathbb{P}\Big(\bigcup_{K=1}^{\infty}N(K)\cup B_{1}(K)\cup
B_{2}(K)\Big)\leq\frac{1}{n^{3}}.\]
However, if this event did not
happen then (\ref{eq:D1K}) gives that the number of non-NV vertices is smaller than $C\log
n(D(k,l)_{1}+1)$ and with (\ref{eq:D2K}) we get $|D(k,l)_{2}|\leq
C\log n\sqrt{D(k,l)_{1}+1}$. Summing over $k$ and $l$ proves the lemma.
\end{proof}
\begin{lem}
\label{lem:tan nonNV}Let $R$ and $E$ be an SRW and an MAW coupled
as above. Assume the event of lemma \ref{lem:D1D2} did not happen.
Let $m<n$ be a tan point of $R$. Then at least one of
$[m-n^{\epsilon},m]$ is a non-NV point of $E$.
\end{lem}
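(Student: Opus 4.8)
The plan is to argue by contradiction: suppose every point of $E$ in the window $[m-\lceil n^\epsilon\rceil, m]$ is an NV-vertex (its left neighbor was visited before). By the coupling, $E$ and $R$ move together precisely at NV-vertices, so on the whole window $[m-\lceil n^\epsilon\rceil, m]$ the increments of $E$ and $R$ coincide; equivalently $D(m-\lceil n^\epsilon\rceil, l) = 0$ for all $l$ in the window, and more to the point $E$ translates $R$ by the fixed vector $w := E(m-\lceil n^\epsilon\rceil) - R(m-\lceil n^\epsilon\rceil)$ throughout that window. So on the window, $E = R + w$.

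Next I would use the defining properties of a tan point. By definition of $E_3(m,n)$ (Lemma \ref{lem:E3}), $R[m-\lceil n^\epsilon\rceil, m]$ avoids $(-1,0) + R(m)$, i.e. the walk $R$ on this last window does not step onto the left neighbor of its endpoint $R(m)$. Translating by $w$: since $E = R + w$ on the window, $E[m-\lceil n^\epsilon\rceil, m]$ avoids $(-1,0) + R(m) + w = (-1,0) + E(m)$. In other words, in the last $\lceil n^\epsilon\rceil$ steps $E$ never visits the left neighbor of $E(m)$. The claim is that this forces $E(m)$ itself to be a non-NV vertex. To see this, I need the stronger statement that the left neighbor of $E(m)$ was \emph{never} visited by $E$ before time $m$ — not just in the last window. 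This is where the second, "funnel" property of the tan point enters: $R[0, m-\lceil n^\epsilon\rceil]$ avoids $F + R(m)$, the whole tanning funnel anchored at $R(m)$; in particular it avoids the left neighbor of $R(m)$, which sits inside $F$. But on $[0, m-\lceil n^\epsilon\rceil]$ the walks $E$ and $R$ need \emph{not} agree, so I cannot translate directly. Instead I control the discrepancy $D(k,l) = E(l)-R(l)-E(k)+R(k)$ using Lemma \ref{lem:D1D2}, whose bad event we are assuming does not occur: for every $k < l < n$, $|D(k,l)_2| \le C\log n\,\sqrt{D(k,l)_1 + 1}$, and moreover $D(k,l)_1 \ge 0$ since $(E-R)_1$ is non-decreasing. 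Setting $k = m - \lceil n^\epsilon\rceil$ and letting $l$ range over $[0, k]$ (reading $D$ backwards), the point $E(l)$ differs from $R(l) + w$ by a vector whose first coordinate is $\le 0$ (it lags behind) and whose second coordinate is at most $C\log n\,\sqrt{|{\rm (first\ coord)}|+1}$ in absolute value — exactly the shape of the funnel $F$. So if $R(l)$ lies outside $F + R(m)$ with the generous $\log^3 n$ aperture, then $E(l) = R(l) + w + ({\rm small})$ still lies outside, say, the left neighbor of $E(m) = R(m) + w$; here the point is that the funnel $F$ is fat enough ($\log^3 n$ versus the $C\log n$ discrepancy bound) to absorb the coupling error with room to spare for large $n$. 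Hence $E$ never visits the left neighbor of $E(m)$ on $[0, m-\lceil n^\epsilon\rceil]$ either, and combining with the last-window statement, $E$ never visits it before time $m$ at all, so $E(m)$ is non-NV — contradicting our assumption.

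The main obstacle, and the step deserving the most care, is the geometric bookkeeping in the previous paragraph: matching the $\log^3 n$ aperture of the funnel $F$ in \eqref{eq:funnel} against the $C\log n$ discrepancy bound from Lemma \ref{lem:D1D2}, keeping track that the first coordinate of the discrepancy has the right sign (so that $E(l)$ can only be pushed \emph{left} of $R(l)+w$, and the funnel, which opens to the right, still shields the left neighbor of the endpoint), and handling the boundary of the window where the two regimes (agreeing increments on $[m-\lceil n^\epsilon\rceil,m]$, controlled discrepancy on $[0,m-\lceil n^\epsilon\rceil]$) meet. Once the inclusion "$R$ avoids $F + R(m)$ and the discrepancy bound holds $\Rightarrow$ $E$ avoids $(-1,0)+E(m)$" is set up cleanly, the conclusion is immediate. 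Note also that the statement as written ("at least one of $[m-n^\epsilon,m]$ is a non-NV point") should be read as "at least one point of $E$ in the time-interval $[m-\lceil n^\epsilon\rceil, m]$"; the argument in fact delivers the cleaner conclusion that the endpoint $E(m)$ works, and then any point in the window whose left neighbor is first visited during the window does too.
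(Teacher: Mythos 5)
Your proposal is correct and follows essentially the same route as the paper's proof: assume for contradiction that the whole window is NV, conclude that $E-R$ is constant on $[m-\lceil n^\epsilon\rceil,m]$ and hence the first tan-point condition transfers directly, then use the discrepancy bound of Lemma~\ref{lem:D1D2} (with the crucial observation that $(E-R)_1$ is nondecreasing) to show that a visit of $E$ to the left neighbor of $E(m)$ before time $m-\lceil n^\epsilon\rceil$ would force $R$ into $R(m)+F$, contradicting the second tan-point condition since the funnel's $\log^3 n$ aperture dominates the $C\log n$ discrepancy. The paper phrases the earlier-window step in the contrapositive direction (from $E(l)$ being the left neighbor of $E(m)$, deduce $R(l)\in R(m)+F$), which is marginally cleaner than your ``push $R(l)$ by $w$ plus a small error'' framing, but the content is identical.
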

\begin{proof}
If all of $[m-n^{\epsilon},m-1]$ were NV points of $E$, then
$E(l)-R(l)$ did not change throughout this time. Hence the first
condition in the definition of a tan point, that $R(l)$ did not
visit the left neighbor of $R(m)$, ensures that $E(l)$ did not
visit the left neighbor of $E(m)$. So this period is secured.
Examine now the time $[0,m-n^{\epsilon}]$. If for some
$l\in[0,m-n^{\epsilon}]$ we have that $E(l)$ is the left neighbor
of $E(m),$ then $R(l)=\textrm{left neighbor of }R(m)+D(l,m)$. But
we assumed (this is the event of lemma \ref{lem:D1D2}) that
$|D(k,l)_{2}|\leq C\log n\sqrt{D(k,l)_{1}+1}$. Hence (if $n$ is
sufficiently large), $R(l)\in R(m)+F$, in contradiction to the
second condition in the definition of a tan point.
\end{proof}

\begin{proof}[Proof of Theorem \ref{thm:MAW}]
Couple the MAW $E$ to a SRW $R$ as above. Examine the first $n$ steps of
both. By lemma
\ref{lem:manytans} there are (with probability $>1-C/n)$
$n^{3/4+o(1)}$ tan points $m_{i}<n$ which are separated
i.e.~$|m_{i}-m_{j}|>n^{\epsilon}$ for all $i\neq j$. By lemma
\ref{lem:tan nonNV} this shows that there are at least so many
visits of $E$ to non-NV vertices. This shows that with probability
$>1-C/n$ that $(E(n)-R(n))_{1}>n^{3/4+o(1)}$. This shows that with
probability $1$, $E(2^{n})-R(2^{n})>2^{3n/4+o(n)}$. Since
$(E-R)_{1}$ is monotone we get $E(n)-R(n)>n^{3/4+o(1)}$ with
probability $1$. Hence $E$ is transient.
\end{proof}

\end{document}